\newcommand{\R}{\mathbb R}
\newcommand{\N}{\mathbb N}
\newcommandx{\inner}[2]{\left<{#1},{#2}\right>}
\newcommandx{\norm}[1][1=\cdot]{\left|{#1}\right|}
\newcommandx{\supnorm}[2][1=\cdot,2=]{\nnorm[#1]_{\infty,#2}}
\newcommandx{\nnorm}[1][1=\cdot]{\left|\left|{#1}\right|\right|}
\newcommandx{\Set}[2][2=]{
    \ifthenelse{\isempty{#2}}
        {\left\{ {#1} \right\}}
        {\left\{  {#1}  \, \middle| \, {#2} \right\}}
}
\DeclareMathOperator{\dist}{dist}
\DeclareMathOperator{\Supp}{Supp}
\DeclareMathOperator{\ess-sup}{ess-sup}
\DeclareMathOperator{\essinf}{ess-inf}
\DeclareMathOperator{\diver}{div}
\DeclareMathOperator{\image}{Im}
\DeclareMathOperator{\sign}{sign}
\numberwithin{equation}{section}
\numberwithin{figure}{section}
\theoremstyle{plain}
\newtheorem{thm}{Theorem}[section]
 \newtheorem{cor}[thm]{Corollary}
 \newtheorem{lem}[thm]{Lemma}
 \newtheorem{prop}[thm]{Proposition}
 \newtheorem{defn}[thm]{Definition}
 \newtheorem{rem}[thm]{Remark}
 \newtheorem{assume}{Assumption}
\begin{document}
\global\long\def\R{\mathbb{R}}
\title{$L^1$-contraction property of Entropy Solutions for Scalar Conservation
Laws with Minimal Regularity Assumptions on the Flux}

\author{Paz Hashash}
\date{\today}
\maketitle
\begin{abstract}
This paper is concerned with entropy solutions of scalar conservation laws of the form
\[
\partial_t u + \operatorname{div} f = 0 \quad \text{in } \mathbb{R}^d \times (0, \infty),
\]
where the flux \( f = f(x, u) \) depends explicitly on the spatial variable \( x \).  
Using an extension of Kruzkov's doubling variable method, we establish contraction properties of entropy solutions under minimal regularity assumptions on the flux, as well as the uniqueness of entropy solutions.  
The flux is assumed to be locally Lipschitz, along with some additional conditions.
\end{abstract}
\textbf{Keywords:} Partial differential equations, scalar conservation laws, entropy solutions for scalar conservation laws, $L^1$-contraction property, uniqueness of entropy solutions.\\
\textbf{Mathematics Subject Classification (2010):} 35Axx, 35Bxx, 35Qxx
\tableofcontents{}

\section{\textbf{Introduction}}
A scalar conservation law is a quasilinear partial differential equation of the form  
\[
\partial_{t} u + \diver f = 0
\]  
in the half-space \((x, t) \in \mathbb{R}^d \times (0, \infty)\). Here, \(u = u(x, t) \in \mathbb{R}\) is the unknown function, \(f = f(x, u)\) is the flux function, and the divergence operator \(\diver\) is taken with respect to the spatial variable \(x\).  

It is well known that, in general, global (in time) differentiable solutions for the scalar conservation law do not exist. Consequently, the concept of weak solutions (or integral solutions) is introduced. Weak solutions are obtained by multiplying the scalar conservation law by a test function and formally\footnote{This is because \(u\) is generally not differentiable.} integrating by parts to transfer the derivatives from \(u\) and \(f\) to the test function.  

As demonstrated in \cite{Kruzkov}, for sufficiently smooth flux \(f\), the existence of a weak solution can be established by first proving the existence of solutions to the viscous parabolic equation  
\[
\partial_{t} u_{\epsilon} + \diver f(x, u_{\epsilon}) = \epsilon \Delta u_{\epsilon}, \quad \epsilon \in (0, \infty),
\]  
and then taking the limit \(\epsilon \to 0^+\). For regular flux \(f\) and initial data \(u_0\), the viscous solutions \(u_{\epsilon}\) are unique and regular; however, the limiting weak solution is generally not regular.  

Weak solutions to scalar conservation laws are not necessarily unique. To address this, the concept of "entropy solutions" is introduced. Entropy solutions are weak solutions that satisfy specific "entropy conditions." The \(L^1\)-contraction property of entropy solutions is crucial in proving their uniqueness within the class of weak solutions. For a general introduction to entropy solutions for scalar conservation laws, refer to \cite{dafermos, Evans,GagneuxMadauneTort1996,GodlewskiRaviart}.  

The \(L^1\)-contraction property was first mentioned, possibly, by Oleinik in 1957 (translated into English in 1963, see \cite{Oleinik}). This property applies to solutions admissible under Oleinik's one-sided Lipschitz condition for the one-dimensional case with convex flux \(f\) in \(u\), but inhomogeneous, meaning that \(f\) explicitly depends on the spatial variable \(x\). Vol'pert later provided the first multi-dimensional result, though limited to BV solutions \cite{vol'pert}, using pointwise jump admissibility conditions from \cite{Gelfand}, rather than an entropy formulation.  

Kruzhkov introduced a method for proving \(L^1\)-contraction via entropy inequalities \cite{Kruzkov}. In this approach, the entropy inequality can be viewed as a localized contraction inequality (the so-called Kato inequality) applied to the candidate solution and the constant solution \(u(x,t) \equiv k\). For inhomogeneous flux, the constant \(k\) satisfies the conservation law with a source term \(\text{div}_x f(x, k)\), which introduces an additional term. Obvious reference solutions can replace constants in the analysis \cite{Audusse, Colombo}. This perspective is crucial in the study of discontinuous fluxes via adapted entropy inequalities \cite{Audusse, andreianov2011}.  

The semigroup method developed in the early 1970s inherently leads to \(L^1\)-contraction \cite{Crandall, Benilan1}. The kinetic formulation, developed in the 1990s, offers a genuinely alternative approach (compared to the doubling of variables) for proving \(L^1\)-contraction for entropy solutions with pure \(L^1\) data \cite{Perthame1, Perthame2}. The inhomogeneous case was later developed in \cite{Dalibard}.  

Various cases of fluxes with explicit dependence on the spatial variable have been studied in the literature. Otto \cite{otto} considered fluxes of the form \(f(x, u) = g(u)v(x)\), where \(v(x)\) is a given velocity field. Seguin and Vovelle \cite{SeguinAnslysis} established \(L^1\)-contraction for fluxes of the form \(f(x, u) = k(x)u(1 - u)\), where \(k(x)\) is discontinuous. More recently, Bachmann and Vovelle \cite{BachmannJulien} proved \(L^1\)-contraction for fluxes of the form \(f(x, u) = g(x, u) + h(u)\), where \(g(x, u)\) is discontinuous, with \(g(x, u) = g_L(u)\) for \(x < 0\) and \(g(x, u) = g_R(u)\) for \(x > 0\), and \(g_L \neq g_R\) are Lipschitz functions. \(h(u)\) is also Lipschitz. Many other works have addressed cases with discontinuous fluxes; see, e.g., \cite{andreianov2015,bulicek,crasta, diehl, jimenez, mitrovic, panov, shen2012, shen2013}.  
In \cite{BLe}, Ben-Artzi and LeFloch established the \(L^1\)-contraction property on manifolds, assuming the flux is smooth. In \cite{Lengeler-Muller}, Lengeler and M{\"u}ller proved \(L^1\)-contraction assuming that \(f = f(x, u)\), as well as its derivative \(\partial_u f\), are continuously differentiable.  

In \cite{Kruzkov}, Kruzkov used the doubling of variables method to establish the \(L^1\)-contraction property via entropy inequalities. He proved the property under the assumption that the flux \(f = f(x, k)\) is continuously differentiable, with spatial derivatives \(\partial_{x_i} f(x, k)\), \(1 \leq i \leq d\), $i\in\N$, that are Lipschitz continuous in \(k\) (see details below).  
In this paper, we establish the \(L^1\)-contraction property for scalar conservation laws of the form  
\[
\partial_{t} u + \diver f(x, u) = 0,
\]  
where the flux \(f = f(x, k)\) explicitly depends on the spatial variable \(x\) and satisfies minimal regularity conditions. Our proof uses a suitable extension of Kruzkov's method.
We establish the \(L^1\)-contraction property under the assumption that the flux \(f = f(x, k)\) satisfies certain regularity hypotheses:

\begin{assume}(Assumptions on the flux $f$) 
\label{ass:assumption on the flux} For a function $f:\R^d\times\R\to \R^d, f=f(x,k),$ we assume:
\\
1. $f$ is locally Lipschitz on $\R^d\times\R$.
\\ 
2a. There exists a set $\Theta\subset\R^d$ such that $\mathcal{L}^d(\Theta)=0$, where $\mathcal{L}^d$ is the $d$-dimensional Lebesgue measure, and for every $k\in \R$, the function $f(\cdot,k):\R^d\to \R^d$ is differentiable at every point $x\in \R^d\setminus\Theta$. Moreover, we assume that for every $x\in \R^d\setminus\Theta$, the function $k\mapsto D_xf(x,k)$ is continuous, where $D_xf(x,k)$ is the differential of $f(\cdot,k)$ at $x$.
\\
2b. Moreover, at every such point $x$, we assume that for every compact set $K\subset\R$ we have
\begin{equation}
\label{eq:uniform convergence of differential quotient}
\lim_{y\to x}\left(\sup_{k\in K}\frac{|f(y,k)-f(x,k)-D_xf(x,k)(y-x)|}{|y-x|}\right)=0.
\end{equation} 
\end{assume}

In this article, our objective is to establish the $L^1$-contraction property while imposing minimal regularity assumptions on the flux function $f$. We would like to highlight two key distinctions between Kruzkov's assumptions on the flux \cite{Kruzkov} and Assumption 1 above:
\begin{enumerate}
\item In Kruzkov's proof, the flux $f= f(x,k)$ is assumed to be continuously differentiable, while we assume it to be locally Lipschitz. Continuous differentiability implies \eqref{eq:uniform convergence of differential quotient} (see Remark \ref{rem:uniform continuity of the differential quotient}).

\item In Kruzkov's proof, the derivatives of the flux $f = f(x,k)$ with respect to the spatial variables $x_i, 1\leq i\leq d$, $i\in\N$, are assumed to be Lipschitz continuous with respect to $k$; while, we only assume that the derivatives of the flux with respect to the spatial variable exist almost everywhere\footnote{The phrases "almost everywhere" and "almost every" always refers to Lebesgue measure of the relevant dimension. For example, if we write "for almost every $x\in\R^d$", we mean almost everywhere with respect to the $d$-dimensional Lebesgue measure. Similarly, if we write "for almost every $(x,t)\in\R^d\times (0,\infty)$", we mean almost everywhere with respect to the $(d+1)$-dimensional Lebesgue measure, and so on.} and they are continuous functions of the variable $k$.    
\end{enumerate}

The article is organized as follows: Section \ref{sec:Remarks about the assumptions of the flux $f$} is devoted to a discussion about Assumption \ref{ass:assumption on the flux}. Section \ref{sec:Lipschitz analysis} focuses on Lipschitz analysis. In Section \ref{sec:scalar conservation laws}, we present the main concepts related to scalar conservation laws and entropy solutions. In Section \ref{sec:L1-contraction property}, we prove the $L^1$-contraction property for entropy solutions.
\\
\\
{\it To the best of the author's knowledge, the primary contribution of this paper is the proof of the localized contraction property (Lemma \ref{lem:Kato's inequality}) under Assumption \ref{ass:assumption on the flux} regarding the flux. All other content is included for the sake of completeness and self-containment.}
\section{Remarks about the assumptions on the flux $f$}
\label{sec:Remarks about the assumptions of the flux $f$}
In this section we give some remarks about the mentioned above Assumption \ref{ass:assumption on the flux}.

\begin{rem}(Uniform differentiability almost everywhere)
\label{rem:Uniform differentiability almost everywhere}
Assume $f:\R^d\times\R\to \R^d, f=f(x,k),$ is a measurable function. Let us distinguish between the following two assertions:
\begin{enumerate}
\item For every $k\in \R$, there exists $\Theta_k\subset\R^d$ such that $\mathcal{L}^d(\Theta_k)=0$, and for every $k\in \R$, the function $x\mapsto f(x,k)$ is differentiable at every $x\in\R^d\setminus \Theta_k$;

\item \textbf{Uniform differentiability almost everywhere}: there exists $\Theta\subset\R^d$ such that $\mathcal{L}^d(\Theta)=0$, and for every $k\in \R$, the function $x\mapsto f(x,k)$ is differentiable at every $x\in\R^d\setminus \Theta$.
\end{enumerate}
The second assertion implies the first assertion, but not the opposite. In case $f$ is a Lipschitz function, Rademacher's Theorem (see Theorem \ref{thm:Rademacher's theorem}) tells us that for every $k\in \R$, the function $f(\cdot,k):\R^d\to \R^d$ is differentiable almost everywhere, and we get the first assertion. However, it does not imply uniform differentiability almost everywhere. Notice that item 2a in Assumption \ref{ass:assumption on the flux} requires that the flux $f$ has the property of uniform differentiability almost everywhere.
\end{rem}

\begin{rem}(Continuous differentiability of the flux $f$ implies property \eqref{eq:uniform convergence of differential quotient})
\label{rem:uniform continuity of the differential quotient}
In this remark, we want to show that if the flux $f$ as in Assumption \ref{ass:assumption on the flux} is continuously differentiable, then the property \eqref{eq:uniform convergence of differential quotient} holds.

It is enough to prove it for scalar functions $f$. Let $f:\R^d\times\R\to \R$ be a continuously differentiable function. Then, for $x\in \R^d$ and a compact set $K\subset\R$, $D_xf$ is uniformly continuous on $\overline{B}_1(x)\times K$, so for arbitrary $\xi\in (0,\infty)$ there exists $\delta\in(0,1)$ such that, for every  $(x_1,k_1),(x_2,k_2)\in \overline{B}_1(x)\times K$, such that $|(x_1,k_1)-(x_2,k_2)|<\delta$ we have $|D_xf(x_1,k_1)-D_xf(x_2,k_2)|\leq \xi$. Therefore, for every $(y,k)\in B_\delta(x)\times K$, $y\neq x$, we get by the Fundamental Theorem of Calculus
\begin{multline}
\frac{|f(y,k)-f(x,k)-D_xf(x,k)(y-x)|}{|y-x|}=\frac{\left|\int_0^1\frac{d}{dt}f(ty+(1-t)x,k)dt-D_xf(x,k)(y-x)\right|}{|y-x|}
\\
\leq \int_0^1\big|D_xf(ty+(1-t)x,k)-D_xf(x,k)\big|dt\leq \xi.
\end{multline}
Therefore,
\begin{equation}
\lim_{y\to x}\left(\sup_{k\in K}\frac{|f(y,k)-f(x,k)-D_xf(x,k)(y-x)|}{|y-x|}\right)=0.
\end{equation}     
\end{rem}

\begin{rem}[Comparison with alternative setting of conditions]
In \cite{KarlsenRisebro2000}, Karlsen and Risebro established the $L^1$-contraction property under the following conditions on the flux $f:\R^d\times\R\to \R^d$:
\begin{equation}
\label{eq:set of conditions 1}
\begin{cases}
1)\,\, f(\cdot,u)\in W^{1,1}_{\text{loc}}(\R^d)
\\
2)\,\,\diver_xf(\cdot,u)\in L^\infty(\R^d)
\\
3)\,\,|f(x,u)-f(x,v)|\leq C|u-v|
\\
4)\,\,|\diver_xf(x,u)-\diver_xf(x,v)|\leq C|u-v|
\end{cases} \forall u,v\in \R,\, x\in \R^d,
\end{equation}
where the constant $C$ does not depend on $x,u,v$. They require an additional condition: for all $x,y\in\R^d$ and $v,u\in\R$
\begin{multline}
\label{eq:set of conditions 2}
\bigg(\sign(v-u)[f(x,v)-f(x,u)]-\sign(v-u)[f(y,v)-f(y,u)]\bigg)\cdot(x-y)\geq -\gamma|v-u||x-y|^2.
\end{multline}
As is explained in \cite{KarlsenRisebro2000}, there exist functions which satisfy conditions \eqref{eq:set of conditions 1}, \eqref{eq:set of conditions 2}, but they are not Lipschitz with respect to the spatial variable $x$, so the conditions in Assumption \ref{ass:assumption on the flux} are not satisfied. Conversely, there are fluxes which satisfy the conditions of Assumption \ref{ass:assumption on the flux} but not the conditions \eqref{eq:set of conditions 1}, \eqref{eq:set of conditions 2}. Take, for example, the flux $f:\R\times\R\to \R$, $f(x,u)=x^2$. It is easy to check that this flux satisfies the conditions of Assumption \ref{ass:assumption on the flux}, but its derivative $f_x(x,u)=2x$ is not globally bounded, so the second condition in \eqref{eq:set of conditions 1} does not hold. 
Note also that:
\begin{enumerate}
\item Item 1 in Assumption \ref{ass:assumption on the flux} is more general than item 3) in \eqref{eq:set of conditions 1};

\item In Assumption 1, there is no Lipschitz requirement on the derivatives of the flux as in item 4) in \eqref{eq:set of conditions 1}. However, there is a continuity assumption; see 2a in Assumption \ref{ass:assumption on the flux};

\item Note that items 1) and 2) in \eqref{eq:set of conditions 1} do not imply the locally Lipschitz condition. Item 2) states that the spatial divergence is bounded, but other partial weak derivatives might not be bounded. In dimension $1$, these two items, together with item 3), imply that the flux $f$ is Lipschitz with respect to the spatial variable $x$ and the real variable $u$, which does not guarantee the locally Lipschitz property in the variable $(x,u)$. The opposite is not true as well: if $f$ is locally Lipschitz in the variable $(x,u)\in\R^d\times\R$, then the almost everywhere divergence $\diver_xf$ is only locally bounded and not globally, hence item 2) of \eqref{eq:set of conditions 1} does not hold in general.
\end{enumerate}
\end{rem}

\begin{rem}[Comparison with an alternative setting of conditions]
In \cite{KarlsenChen2005}, Karlsen and Chen proved the contraction property for entropy solutions assuming that the flux $f=f(x,u):\R^d\times \R\to \R^d$ satisfies the following hypotheses:
\begin{equation}
\label{eq:set of conditions 3}
\begin{cases}
1)\, f(\cdot,u)\in L^\infty(\R^d,\R^d)\cap W^{1,\infty}(\R^d,\R^d)\quad & u\in I, \\
2)\, f(x,\cdot)\in W^{1,\infty}(I,\R^d)\quad & x\in \R^d.
\end{cases}
\end{equation}
Here, $I$ is a fixed closed and bounded interval in $\R$. Recall that the sobolev space $W^{1,\infty}(\R^d,\R^d)$ coincides with the space of Lipschitz functions from $\R^d$ to $\R^d$. The conditions in \eqref{eq:set of conditions 3} do not imply Assumption \ref{ass:assumption on the flux} because they do not guarantee the local Lipschitz continuity in the variable $(x,u)\in \R^d\times\R$. Conversely, Assumption \ref{ass:assumption on the flux} does not imply the conditions in \eqref{eq:set of conditions 3}, as it does not ensure global boundedness or global Lipschitz continuity as required by $1)$ in \eqref{eq:set of conditions 3}.
\end{rem}

\section{Lipschitz analysis}
\label{sec:Lipschitz analysis}
In this section, we state and prove some results about Lipschitz analysis that will be used throughout the article. We present and prove two technical lemmas concerning integration by parts and differentiation under the sign of the integral involving Lipschitz functions. These lemmas are employed to establish regularity properties for the entropy flux (see Definition \ref{def:entropy pair R}).

Recall Rademacher's Theorem \cite{evans2015measure,F69}:
\begin{thm}(Rademacher's Theorem, \cite{evans2015measure,F69})
\label{thm:Rademacher's theorem}
Let 
$f:\mathbb{R}^d\to \mathbb{R}$ be a locally Lipschitz function. Then, $f$ is differentiable almost everywhere. In particular, it has partial derivatives that exist almost everywhere
and lie in the space $L^\infty_{\text{loc}}(\mathbb{R}^d)$.
\end{thm}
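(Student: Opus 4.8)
The plan is to follow the classical route: one-dimensional differentiability of Lipschitz functions, Fubini's theorem, and a separability argument over directions. First I would reduce to the case of a globally Lipschitz $f:\R^d\to\R$. Differentiability is local, and $\R^d$ is a countable union of open balls $B_j$ whose closures lie in larger bounded balls on which $f$ is $L_j$-Lipschitz; extending that restriction to a globally $L_j$-Lipschitz $\tilde f_j$ (McShane extension) and noting $f=\tilde f_j$ near every point of $B_j$, the general case follows from the global one, with the partial derivatives on $B_j$ bounded by $L_j$, hence $\nabla f\in L^\infty_{\mathrm{loc}}(\R^d)$. So assume $f$ is $L$-Lipschitz on $\R^d$. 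For a fixed $v\in S^{d-1}$, the map $t\mapsto f(x+tv)$ is Lipschitz on $\R$, hence differentiable for a.e.\ $t$; by Fubini the directional derivative $D_vf(x):=\lim_{t\to 0}t^{-1}(f(x+tv)-f(x))$ exists for a.e.\ $x$. Taking $v=e_1,\dots,e_d$ gives the partial derivatives $\partial_i f$ a.e., each with $|\partial_i f|\le L$, so $\nabla f:=(\partial_1 f,\dots,\partial_d f)$ is a well-defined measurable a.e.\ vector field bounded by $\sqrt d\,L$.

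Next I would show that, for each fixed $v$, $D_vf(x)=\nabla f(x)\cdot v$ for a.e.\ $x$. Given $\varphi\in C_c^\infty(\R^d)$, a change of variables gives $\int t^{-1}(f(x+tv)-f(x))\varphi(x)\,dx=-\int f(x)\,t^{-1}(\varphi(x)-\varphi(x-tv))\,dx$. Letting $t\to 0$: on the right, $t^{-1}(\varphi(\cdot)-\varphi(\cdot-tv))\to\nabla\varphi\cdot v$ uniformly with support in a fixed compact set, so the right side tends to $-\int f\,\nabla\varphi\cdot v=-\sum_i v_i\int f\,\partial_i\varphi$; on the left, the difference quotients are bounded by $L$ and converge a.e.\ to $D_vf$, so by dominated convergence the left side tends to $\int D_vf\,\varphi$. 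Applying the same identity with $v=e_i$ gives $-\int f\,\partial_i\varphi=\int\partial_i f\,\varphi$ (i.e.\ the a.e.\ partial derivative is the distributional one), whence $\int D_vf\,\varphi=\sum_i v_i\int\partial_i f\,\varphi=\int(\nabla f\cdot v)\varphi$ for all $\varphi$, so $D_vf=\nabla f\cdot v$ a.e.

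Finally I would upgrade pointwise directional differentiability to full differentiability. Fix a countable dense set $\{v_k\}_{k\in\N}\subset S^{d-1}$, and let $A$ be the full-measure set of points $x$ where $\nabla f(x)$ exists and $D_{v_k}f(x)=\nabla f(x)\cdot v_k$ for every $k$. Fix $x\in A$ and define $Q(v,t):=t^{-1}(f(x+tv)-f(x))-\nabla f(x)\cdot v$ for $v\in S^{d-1}$, $t\ne 0$. The map $v\mapsto Q(v,t)$ is $2L$-Lipschitz, uniformly in $t$, and $Q(v_k,t)\to 0$ as $t\to 0$ for every $k$. Given $\varepsilon>0$, choose $v_{k_1},\dots,v_{k_N}$ that are $\varepsilon$-dense in $S^{d-1}$ and $\delta>0$ with $|Q(v_{k_j},t)|<\varepsilon$ for all $j$ and $0<|t|<\delta$; then for any $v$, picking $v_{k_j}$ with $|v-v_{k_j}|<\varepsilon$ yields $|Q(v,t)|\le|Q(v_{k_j},t)|+2L\varepsilon<(1+2L)\varepsilon$. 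Writing an increment as $h=tv$ with $t=|h|$, this says $|h|^{-1}\bigl|f(x+h)-f(x)-\nabla f(x)\cdot h\bigr|\to 0$ as $h\to 0$, i.e.\ $f$ is differentiable at $x$, and $A$ has full measure. The main obstacle is precisely this last step: a.e.\ existence (and even linearity) of the directional derivatives does not by itself give differentiability — one needs the difference quotients to converge \emph{uniformly in the direction}, and it is the equi-Lipschitz bound on $Q(\cdot,t)$ combined with separability of the sphere that closes this gap.
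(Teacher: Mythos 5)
The paper does not prove this statement; it is quoted as a classical result with a citation to Evans--Gariepy and Federer, and your argument is precisely the standard proof found in those references (a.e.\ existence of directional derivatives via Fubini, identification $D_vf=\nabla f\cdot v$ by integrating against test functions, then a countable dense set of directions plus the equi-Lipschitz bound on $v\mapsto Q(v,t)$ to get uniformity in the direction). The argument is correct; the only cosmetic point is the constant $2L$, which requires first observing that $|\nabla f(x)|\le L$ at points of $A$ (e.g.\ from $|\nabla f(x)\cdot v_k|=|D_{v_k}f(x)|\le L$ and density of $\{v_k\}$), or simply replacing $2L$ by $(1+\sqrt{d})L$ throughout.
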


\begin{lem}(Differentiation under the sign of the integral for Lipschitz functions)
\label{lem:derivation under the sign of the integral for Lipschitz functions}
Let $G:\R^d\times\R\to \R^d$, $G=G(x,\omega)$, be a locally Lipschitz function, $\xi\in L^\infty_{\text{loc}}(\mathbb{R})$, and let $B\subset\R$ be a bounded Lebesgue measurable set. Then, we have, for almost every $x\in \R^d$,
\begin{equation}
\label{eq:derivation under the sign of the integral}
D_x\int_{B} \xi(\omega)G(x,\omega)d\omega=\int_{B}\xi(\omega)D_xG(x,\omega)d\omega.
\end{equation}
\end{lem}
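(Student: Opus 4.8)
The plan is to localize the problem, note that the left-hand side of \eqref{eq:derivation under the sign of the integral} is the differential of a locally Lipschitz function of $x$ (hence exists a.e.\ by Rademacher's Theorem), and then identify that differential by passing to the limit in difference quotients under the integral sign. First I would fix $R>0$ and $M>0$ with $B\subseteq[-M,M]$. Since $G$ is locally Lipschitz, it is Lipschitz with some constant $L=L(R,M)$ on $\overline{B_{R+1}(0)}\times[-M,M]$, and $\xi\in L^\infty([-M,M])$ with $C_\xi:=\|\xi\|_{L^\infty([-M,M])}<\infty$. Hence $H(x):=\int_B\xi(\omega)G(x,\omega)\,d\omega$ is well defined, and for $x,x'\in B_R(0)$ one has $|H(x)-H(x')|\le C_\xi L\,\mathcal L^1(B)\,|x-x'|$; since $R$ is arbitrary, $H$ is locally Lipschitz on $\mathbb R^d$, so by Theorem \ref{thm:Rademacher's theorem} the differential $D_xH(x)$ exists for a.e.\ $x$.

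Next I would make sense of the right-hand side of \eqref{eq:derivation under the sign of the integral}. Applying Rademacher's Theorem in $\mathbb R^{d+1}$ to $G$, there is a measurable, $\mathcal L^{d+1}$-null set $N$ outside of which $G$ is (jointly) differentiable; restricting the differential to the $x$-directions, at each $(x,\omega)\notin N$ the map $x'\mapsto G(x',\omega)$ is differentiable at $x$, with differential $D_xG(x,\omega)$ satisfying $|D_xG(x,\omega)|\le L$ on $B_{R+1}(0)\times[-M,M]$. Because $N$ is measurable and the difference quotients are measurable, $D_xG$ (extended by $0$ on $N$) is a measurable function of $(x,\omega)$. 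By Fubini's theorem, for a.e.\ $x$ the slice $N^x:=\{\omega:(x,\omega)\in N\}$ is $\mathcal L^1$-null, so for such $x$ the map $\omega\mapsto\xi(\omega)D_xG(x,\omega)$ is defined a.e.\ on $B$ and bounded by $C_\xi L$, hence integrable over the bounded set $B$; thus the right-hand side of \eqref{eq:derivation under the sign of the integral} is meaningful.

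Finally I would fix $x$ in the full-measure set where $D_xH(x)$ exists and $\mathcal L^1(N^x)=0$, and for $1\le i\le d$ and $0<|h|<1$ write
\[
\frac{H(x+he_i)-H(x)}{h}=\int_B\xi(\omega)\,\frac{G(x+he_i,\omega)-G(x,\omega)}{h}\,d\omega .
\]
For $\omega\in B\setminus N^x$ the integrand converges, as $h\to0$, to $\xi(\omega)D_xG(x,\omega)e_i$, and it is dominated by $C_\xi L\,\mathbf{1}_{B}\in L^1(\mathbb R)$ uniformly in $h$; dominated convergence shows the right side tends to $\int_B\xi(\omega)D_xG(x,\omega)e_i\,d\omega$, while the left side tends to $D_xH(x)e_i$ since $H$ is differentiable at $x$. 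Hence $D_xH(x)e_i=\int_B\xi(\omega)D_xG(x,\omega)e_i\,d\omega$ for every $i=1,\dots,d$, which is precisely \eqref{eq:derivation under the sign of the integral}.

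The only genuinely delicate point is the interchange of the ``for a.e.\ $x$'' and ``for a.e.\ $\omega$'' quantifiers: to even define the right-hand integral for a.e.\ $x$, and to justify the pointwise limit of the difference quotients, one needs a single joint exceptional set rather than a collection of null sets depending on $\omega$ (cf.\ Remark \ref{rem:Uniform differentiability almost everywhere}), and this is exactly what Rademacher's Theorem in $\mathbb R^{d+1}$ combined with Fubini's theorem supplies; checking the routine measurability of $N$ and of the difference-quotient maps is part of this step. Everything else reduces to the Lipschitz bound from local Lipschitzness of $G$ and a single application of dominated convergence.
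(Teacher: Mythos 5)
Your proof is correct, but it takes a genuinely different route from the paper's. The paper never fixes a single good point $x$: it forms the difference-quotient error $H_h(x,\omega)$, shows by dominated convergence on the product $E\times B$ that $\int_E\int_B H_h\,d\omega\,dx\to 0$ for every bounded Lebesgue measurable $E\subset\R^d$, and then, after a second application of dominated convergence in $x$, concludes that $\int_E\bigl(\partial_{x_i}\int_B\xi G\,d\omega-\int_B\xi\,\partial_{x_i}G\,d\omega\bigr)dx=0$ for all such $E$, whence the integrand vanishes almost everywhere. Working on the product space lets the paper sidestep the quantifier issue you correctly isolate as the delicate point: the non-differentiability set of $G$ only needs to be null in $\R^{d+1}$, and no slicing is required. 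Your argument instead fixes $x$ in a full-measure set and applies dominated convergence once, in $\omega$ only; the price is exactly the Rademacher-plus-Fubini slicing step producing, for almost every $x$, a null slice $N^x$, together with the (routine but necessary) measurability of the differentiability set and of $D_xG$. What your version buys is directness: a single limit identifies $D_xH(x)$ pointwise at almost every $x$, with no detour through testing against arbitrary bounded measurable sets and no separate verification that both sides of \eqref{eq:derivation under the sign of the integral} are locally integrable before invoking the ``vanishing integrals imply a.e.\ zero'' principle. Both proofs rest on the same two ingredients --- Rademacher's Theorem applied to $G$ on $\R^{d+1}$ and dominated convergence with the local Lipschitz constant as the dominating bound --- so the difference is organizational rather than substantive, and your write-up is complete as it stands.
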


\begin{proof}  
It suffices to prove that for every $1\leq i\leq d,i\in \mathbb{N},$ we get for almost every $x\in \R^d$
\begin{equation}
\label{eq:derivation under the integral with respect to xi}
\partial_{x_i}\int_{B} \xi(\omega)G(x,\omega)d\omega=\int_{B}\xi(\omega)\partial_{x_i}G(x,\omega)d\omega.
\end{equation}

Let $E\subset\R^d$ be any bounded Lebesgue measurable set. Since $G$ is locally Lipschitz in $\R^d\times \R$ we get by Rademacher's Theorem
for almost every $(x,\omega)\in \R^d\times\R$
\begin{equation}
\lim_{h\to 0}\left(\frac{G(x+he_i,\omega)-G(x,\omega)}{h}-\partial_{x_i}G(x,\omega)\right)=0,
\end{equation}
where $e_i:=(0,...,1,...,0)$ is the unit vector with $1$ in the $i$-place.
Let us define a family of functions 
\begin{equation}
H_h(x,\omega):=\xi(\omega)\left(\frac{G(x+he_i,\omega)-G(x,\omega)}{h}-\partial_{x_i}G(x,\omega)\right),\quad h\in \mathbb{R}\setminus \{0\}.
\end{equation}
Let $U\subset\R^d$ be an open and bounded set such that $\overline{E}\subset U$. Since $G$ is locally Lipschitz in $\R^d\times\R$, then $G$ is Lipschitz in $U\times B$. Let us denote its Lipschitz constant in this set by $L$. Note that, for sufficiently small $|h|>0$ and almost every $(x,\omega)\in E\times B$, we get 
\begin{equation}
|H_h(x,\omega)|\leq \|\xi\|_{L^\infty(B)}\left(L+|\partial_{x_i}G(x,\omega)|\right).
\end{equation}

By Dominated Convergence Theorem we get
\begin{equation}
\label{eq:changing limit and integral}
\lim_{h\to 0}\int_E\int_{B}H_h(x,\omega)d\omega dx
=\int_E\int_{B}\xi(\omega)\lim_{h\to 0}\left(\frac{G(x+he_i,\omega)-G(x,\omega)}{h}-\partial_{x_i}G(x,\omega)\right)d\omega dx=0.
\end{equation}
Since $G$ is locally Lipschitz in $\R^d\times \R$, then the function $x\mapsto \int_{B}\xi(\omega)G(x,\omega)d\omega$ is locally Lipschitz in $\R^d$. Therefore, for almost every $x\in \R^d$ we obtain by Rademacher's Theorem 
\begin{equation}
\label{eq:partial derivative for locally Lipschitz function}
\lim_{h\to 0}\left(\frac{\int_{B}\xi(\omega)G(x+he_i,\omega)d\omega-\int_{B}\xi(\omega)G(x,\omega)d\omega}{h}-\partial_{x_i}\int_{B}\xi(\omega)G(x,\omega)d\omega\right)=0.
\end{equation}
We now use the Dominated Convergence Theorem once again, with its justification provided after \eqref{eq:integral on E of derivatives vanishes}. We get by Dominated Convergence Theorem and equations \eqref{eq:changing limit and integral}, \eqref{eq:partial derivative for locally Lipschitz function} we get
\begin{multline}
\label{eq:integral on E of derivatives vanishes}
\int_E\left(\partial_{x_i}\int_{B}\xi(\omega)G(x,\omega)d\omega-\int_{B} \xi(\omega)\partial_{x_i}G(x,\omega)d\omega\right)dx
\\
=\int_E\left(\lim_{h\to 0}\frac{\int_{B}\xi(\omega)G(x+he_i,\omega)d\omega-\int_{B}\xi(\omega)G(x,\omega)d\omega}{h}-\int_{B}\xi(\omega)\partial_{x_i}G(x,\omega)d\omega\right)dx
\\
=\int_E\lim_{h\to 0}\int_{B}\xi(\omega)\left(\frac{G(x+he_i,\omega)-G(x,\omega)}{h}-\partial_{x_i}G(x,\omega)\right)d\omega dx
\\
=\lim_{h\to 0}\int_E\int_{B}\xi(\omega)\left(\frac{G(x+he_i,\omega)-G(x,\omega)}{h}-\partial_{x_i}G(x,\omega)\right)d\omega dx=0.
\end{multline}
Since $E\subset \R^d$ is an arbitrary bounded Lebesgue measurable set, we get for almost every $x\in \R^d$ the formula \eqref{eq:derivation under the integral with respect to xi}.

Let us explain the use of the Dominated Convergence Theorem in \eqref{eq:integral on E of derivatives vanishes}. Since $G$ is locally Lipschitz in $\R^d\times \R$, we obtain, by Rademacher's Theorem, that $\partial_{x_i}G \in L^\infty_{\text{loc}}(\R^d\times\R,\R^d)$, and so $\partial_{x_i}G \in L^1_{\text{loc}}(\R^d\times\R,\R^d)$. By Fubini's Theorem, we conclude that the function $x\mapsto \int_{B}\left|\partial_{x_i}G(x,\omega)\right|d\omega$ lies in the space $L^1(E)$. Let us denote
\begin{equation}
F_h(x):=\int_{B}\xi(\omega)\left(\frac{G(x+he_i,\omega)-G(x,\omega)}{h}-\partial_{x_i}G(x,\omega)\right)d\omega.
\end{equation}
Note that, by \eqref{eq:partial derivative for locally Lipschitz function}, the limit of $F_h$ as $h\to 0$ exists almost everywhere. For an open and bounded set $U\subset\R^d$ such that $\overline{E}\subset U$, let $L$ be the Lipschitz constant of $G$ on the bounded set $U\times B$. For every $h\in \R\setminus \{0\}$ such that $|h|<\dist(E,\R^d\setminus U)$ and almost every $x\in E$, we have
\begin{multline}
\label{eq:estimate for Fh}
|F_h(x)|\leq \|\xi\|_{L^\infty(B)}\int_{B}\left|\frac{G(x+he_i,\omega)-G(x,\omega)}{h}\right|d\omega+\|\xi\|_{L^\infty(B)}\int_B\left|\partial_{x_i}G(x,\omega)\right|d\omega 
\\
\leq \|\xi\|_{L^\infty(B)}\mathcal{L}^1(B)L+\|\xi\|_{L^\infty(B)}\int_B\left|\partial_{x_i}G(x,\omega)\right|d\omega.
\end{multline}
Thus, the function in the right hand side of \eqref{eq:estimate for Fh} lies in $L^1(E)$. So we can use the Dominated Convergence Theorem in the last line of \eqref{eq:integral on E of derivatives vanishes}.
\end{proof}

\begin{lem}(Integration by parts for Lipschitz functions) 
\label{lem:integration by parts for Lipschitz functions}
Let $g:\R\to \R$ and $h:\R \to \R^d$ be locally Lipschitz functions. Then, the following integration by parts formula holds for every interval $[k_0,k]\subset\R$:
\begin{equation}
\label{eq:integration by parts of locally Lipschitz functions}
\intop_{k_0}^kg(\omega)h'(\omega)d\omega=-\intop_{k_0}^kg'(\omega)h(\omega)d\omega+g(k)h(k)-g(k_0)h(k_0).
\end{equation}
\end{lem}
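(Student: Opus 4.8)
The plan is to reduce the statement to the fundamental theorem of calculus for absolutely continuous functions, applied to the product $gh$. First I would record the relevant elementary facts. The restriction of a locally Lipschitz function to the compact interval $[k_0,k]$ is Lipschitz, hence absolutely continuous on $[k_0,k]$; in particular $g$ and each component $h_i$ of $h=(h_1,\dots,h_d)$ are differentiable almost everywhere on $[k_0,k]$ with derivatives in $L^\infty([k_0,k])$ (this is the one-dimensional case of Rademacher's Theorem, Theorem \ref{thm:Rademacher's theorem}), and each of them satisfies the fundamental theorem of calculus, $\phi(k)-\phi(k_0)=\intop_{k_0}^k\phi'(\omega)\,d\omega$.

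Next I would consider the product $p(\omega):=g(\omega)h(\omega)$, understood componentwise as $p_i=gh_i$. Since $g$ and $h_i$ are both Lipschitz, hence bounded, on $[k_0,k]$, the product $p_i$ is Lipschitz on $[k_0,k]$: for $\omega_1,\omega_2\in[k_0,k]$ one has
\[
|g(\omega_1)h_i(\omega_1)-g(\omega_2)h_i(\omega_2)|\le |g(\omega_1)|\,|h_i(\omega_1)-h_i(\omega_2)|+|h_i(\omega_2)|\,|g(\omega_1)-g(\omega_2)|,
\]
which is bounded by a constant times $|\omega_1-\omega_2|$. Hence $p$ is absolutely continuous on $[k_0,k]$.

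Then I would establish the product rule almost everywhere: at every point $\omega\in(k_0,k)$ at which both $g$ and $h_i$ are differentiable — which, by the previous step, is almost every point — the usual difference-quotient computation gives $p_i'(\omega)=g'(\omega)h_i(\omega)+g(\omega)h_i'(\omega)$. Since $p_i$ is absolutely continuous, applying the fundamental theorem of calculus to $p_i$ yields
\[
g(k)h_i(k)-g(k_0)h_i(k_0)=\intop_{k_0}^k p_i'(\omega)\,d\omega=\intop_{k_0}^k g'(\omega)h_i(\omega)\,d\omega+\intop_{k_0}^k g(\omega)h_i'(\omega)\,d\omega,
\]
where both integrals on the right are finite because $g',h_i'\in L^\infty([k_0,k])$ while $g,h_i$ are continuous, hence bounded, on $[k_0,k]$. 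Collecting the $d$ components into vector form and rearranging gives exactly \eqref{eq:integration by parts of locally Lipschitz functions}.

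There is no serious obstacle here; the only point requiring care is that one must invoke the fundamental theorem of calculus in the form valid for absolutely continuous (equivalently, in this context, Lipschitz) functions rather than for $C^1$ functions, and correspondingly that the product of two Lipschitz functions on a bounded interval is again Lipschitz — hence absolutely continuous — so that the elementary pointwise product rule, valid wherever both factors are differentiable, may legitimately be integrated.
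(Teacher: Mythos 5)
Your proposal is correct and follows essentially the same route as the paper: both arguments observe that the product $gh$ is Lipschitz (hence absolutely continuous) on the compact interval, apply the pointwise product rule at the almost-every point where both factors are differentiable, and then invoke the Fundamental Theorem of Calculus for absolutely continuous functions. Your write-up merely supplies more detail (the componentwise reduction and the explicit Lipschitz bound for the product) than the paper's brief version.
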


\begin{proof}
Since $g$ and $h$ are locally Lipschitz, their product $gh$ is also locally Lipschitz. Therefore, $gh$ is absolutely continuous on bounded closed intervals in $\R$. Therefore, 
\begin{equation}
\label{eq:usage of AC}
\begin{cases}
(gh)'(\omega)=g'(\omega)h(\omega)+g(\omega)h'(\omega)\quad \text{for almost every $\omega$ in $\R$};\\
\intop_{k_0}^k(gh)'(\omega)d\omega
=g(k)h(k)-g(k_0)h(k_0)
\end{cases}.
\end{equation}
We get \eqref{eq:integration by parts of locally Lipschitz functions} from \eqref{eq:usage of AC}. Refer to \cite{Rudin} for a proof of the Fundamental Theorem of Calculus for absolutely continuous functions. 
\end{proof}

The following proposition will enable us to define a notion of "entropy flux" for a given flux $f$. It states that the "entropy flux" $q$ possesses some regularity properties similar to those of the flux $f$ as specified in Assumption \ref{ass:assumption on the flux}:
\begin{prop}(Properties of the entropy flux)
\label{prop:properties of the entropy flux}
Let $f:\R^d\times\R\to \R^d$ be a function satisfying Assumption \ref{ass:assumption on the flux},
$\eta\in C^2(\mathbb{R})$ and $k_0\in \mathbb{R}$.
Let us set
\begin{equation}
q\left(x,k\right):=\intop_{k_{0}}^{k}\eta'\left(\omega\right)\partial_{\omega}f\left(x,\omega\right)d\omega.
\end{equation}
Then, for every $(x,k)\in \R^d\times \R$, we have the following integration by parts representation of the function $q$:
\begin{equation}
\label{eq:identity for integration by parts in formulation}
q(x,k)=-\intop_{k_0}^k\eta''(\omega)f(x,\omega)d\omega+\eta'(k)f(x,k)-\eta'(k_0)f(x,k_0).
\end{equation}
In particular, we get:
\begin{enumerate}
\item $q:\R^d\times\R\to \R^d$
is a locally Lipschitz function (in particular, it is defined everywhere in $\R^d\times\R$).
\item For almost every 
$x\in \R^d$
the function
$k\mapsto  D_xq(x,k)$ is continuous.

\item The family of functions $\{q(\cdot,k)\}_{k\in\R}$ is uniformly differentiable almost everywhere as defined in Remark \ref{rem:Uniform differentiability almost everywhere}.
\end{enumerate}
\end{prop}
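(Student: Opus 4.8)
The plan is to first establish the integration-by-parts representation \eqref{eq:identity for integration by parts in formulation} and then deduce the three properties from it, using Assumption \ref{ass:assumption on the flux} to control the $x$-dependence of the integral term. To begin, I would note the defining integral is meaningful: for fixed $x\in\R^d$ the function $\omega\mapsto f(x,\omega)$ is locally Lipschitz, so by Rademacher's Theorem (Theorem \ref{thm:Rademacher's theorem}) it is differentiable at almost every $\omega$ with $\partial_\omega f(x,\cdot)\in L^\infty_{\mathrm{loc}}(\R)$, and since $\eta'\in C^1(\R)$ the integrand $\eta'(\omega)\partial_\omega f(x,\omega)$ lies in $L^1_{\mathrm{loc}}(\R)$, so $q(x,k)$ is well defined. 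Applying Lemma \ref{lem:integration by parts for Lipschitz functions} with $g=\eta'$ (locally Lipschitz, as $\eta\in C^2$) and $h=f(x,\cdot)$ (locally Lipschitz) on the interval with endpoints $k_0$ and $k$ yields \eqref{eq:identity for integration by parts in formulation}; since the right-hand side of \eqref{eq:identity for integration by parts in formulation} is defined for every $(x,k)$, this representation already shows $q$ is defined everywhere.

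I would prove the three items in the order 1, 3, 2, since 2 relies on 3. For property 1, I would check that each of the three terms on the right of \eqref{eq:identity for integration by parts in formulation} is locally Lipschitz in $(x,k)$: $\eta'(k)f(x,k)$ is a product of locally Lipschitz (hence locally bounded) functions of $(x,k)$; $\eta'(k_0)f(x,k_0)$ is a constant multiple of the locally Lipschitz map $x\mapsto f(x,k_0)$; and for the integral term, fixing a bounded set and two points $(x_1,k_1),(x_2,k_2)$ in it, I would split the difference $\int_{k_0}^{k_1}\eta''(\omega)f(x_1,\omega)\,d\omega-\int_{k_0}^{k_2}\eta''(\omega)f(x_2,\omega)\,d\omega$ into $\int_{k_0}^{k_1}\eta''(\omega)[f(x_1,\omega)-f(x_2,\omega)]\,d\omega$, controlled by $\|\eta''\|_{L^\infty}\,|k_1-k_0|\,L\,|x_1-x_2|$ with $L$ a local Lipschitz constant of $f$, plus $\int_{k_1}^{k_2}\eta''(\omega)f(x_2,\omega)\,d\omega$, controlled by $\|\eta''\|_{L^\infty}\,\|f\|_{L^\infty}\,|k_1-k_2|$. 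Hence $q$ is locally Lipschitz.

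For properties 3 and 2, let $\Theta\subset\R^d$ be the null set from item 2a of Assumption \ref{ass:assumption on the flux}, fix $x\notin\Theta$ and $k\in\R$, and let $K$ be the compact interval with endpoints $k_0$ and $k$. The maps $x\mapsto\eta'(k)f(x,k)$ and $x\mapsto\eta'(k_0)f(x,k_0)$ are differentiable at $x$ by item 2a. For the integral term, I would show directly that it is differentiable at $x$ with differential $\int_K\eta''(\omega)D_xf(x,\omega)\,d\omega$ (which is well defined since $\omega\mapsto D_xf(x,\omega)$ is continuous on $K$ by item 2a): for $y\to x$ one bounds the normalized remainder $\tfrac{1}{|y-x|}\big|\int_K\eta''(\omega)(f(y,\omega)-f(x,\omega)-D_xf(x,\omega)(y-x))\,d\omega\big|$ by $\|\eta''\|_{L^\infty(K)}\,|k-k_0|\,\sup_{\omega\in K}\tfrac{|f(y,\omega)-f(x,\omega)-D_xf(x,\omega)(y-x)|}{|y-x|}$, which tends to $0$ by \eqref{eq:uniform convergence of differential quotient}. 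Summing the three contributions, $q(\cdot,k)$ is differentiable at \emph{every} $x\notin\Theta$, for \emph{every} $k$; this is the uniform differentiability almost everywhere of Remark \ref{rem:Uniform differentiability almost everywhere} (property 3), and it furnishes the formula $D_xq(x,k)=-\int_{k_0}^k\eta''(\omega)D_xf(x,\omega)\,d\omega+\eta'(k)D_xf(x,k)-\eta'(k_0)D_xf(x,k_0)$ for all $x\notin\Theta$. Property 2 is then immediate from this formula for fixed $x\notin\Theta$: $k\mapsto\eta'(k)D_xf(x,k)$ is continuous (item 2a together with continuity of $\eta'$), the last term is constant in $k$, and $k\mapsto\int_{k_0}^k\eta''(\omega)D_xf(x,\omega)\,d\omega$ is even $C^1$ since its integrand $\omega\mapsto\eta''(\omega)D_xf(x,\omega)$ is continuous on compact intervals.

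The step I expect to be the crux is the commutation of the $x$-differentiation with the $\omega$-integral in the integral term, carried out \emph{pointwise} at each $x\notin\Theta$ and \emph{uniformly} in $\omega$ over the compact $k$-interval. Lemma \ref{lem:derivation under the sign of the integral for Lipschitz functions} only delivers this for almost every $x$ and a fixed interval, which is not enough to produce a single exceptional null set valid simultaneously for all $k$ (as required by uniform differentiability almost everywhere); it is hypothesis \eqref{eq:uniform convergence of differential quotient} (item 2b of Assumption \ref{ass:assumption on the flux}) that closes this gap, while the continuity of $k\mapsto D_xf(x,k)$ in item 2a is what upgrades the resulting formula for $D_xq$ to the continuity statement of property 2.
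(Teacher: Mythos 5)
Your proof is correct, and it follows the same overall strategy as the paper: establish the integration-by-parts representation \eqref{eq:identity for integration by parts in formulation} via Lemma \ref{lem:integration by parts for Lipschitz functions}, prove local Lipschitz continuity by splitting the increment of the integral term exactly as the paper does, and use hypothesis \eqref{eq:uniform convergence of differential quotient} to differentiate the integral term in $x$. The one genuine divergence is organizational but substantive: the paper proves item 2 before item 3, obtaining the formula for $D_xq(x,k)$ from Lemma \ref{lem:derivation under the sign of the integral for Lipschitz functions}, which yields the identity only for almost every $x$ with an exceptional null set that a priori depends on $k$; you instead prove item 3 first by differentiating each of the three terms of \eqref{eq:identity for integration by parts in formulation} pointwise at every $x\notin\Theta$ (the boundary terms by item 2a, the integral term by the remainder estimate driven by item 2b), which produces the formula for $D_xq(x,\cdot)$ on a single null set valid simultaneously for all $k$, and then item 2 falls out by inspection. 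This is the cleaner route: it makes the statement ``for almost every $x$ the function $k\mapsto D_xq(x,k)$ is continuous'' literally well-posed, whereas the paper leaves the uniformization of the null set implicit. You are also more complete than the paper in item 3, where the paper's displayed computation treats only the integral term and silently drops the contributions of $\eta'(k)f(x,k)$ and $\eta'(k_0)f(x,k_0)$ to $q(x,k)-q(x_0,k)$; your ``summing the three contributions'' step supplies exactly what is missing there.
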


\begin{proof}
Let $(x,k)\in \R^d\times\R$. Since $f$ is locally Lipschitz, the function $\omega \mapsto f(x,\omega)$ is locally Lipschitz in $\R$. Therefore, it is differentiable almost everywhere and $\partial_{\omega}f(x,\cdot)\in L^\infty_{\text{loc}}(\R,\R^d)$. Consequently, $q$ is defined everywhere in $\R^d\times\R$ with values in $\R^d$. Let us denote $g(\omega):=\eta'(\omega)$ and $h(\omega):=f(x,\omega)$. Note that $g$ and $h$ are locally Lipschitz functions. Therefore, we get, by integration by parts formula (Lemma \ref{lem:integration by parts for Lipschitz functions})
\begin{equation}
\label{eq:identity for integration by parts}
q(x,k)=\intop_{k_0}^k\eta'(\omega)\partial_\omega f(x,\omega)d\omega
=-\intop_{k_0}^k\eta''(\omega)f(x,\omega)d\omega+\eta'(k)f(x,k)-\eta'(k_0)f(x,k_0).
\end{equation}
Since $(x,k)\in \R^d\times \R$ was arbitrary, the formula \eqref{eq:identity for integration by parts} is valid for every $(x,k)\in \R^d\times \R$, which proves \eqref{eq:identity for integration by parts in formulation}. 

\begin{enumerate}
\item Note that since $f$ is locally Lipschitz in $\R^d\times\R$, the function $G(x,k):=\intop_{k_0}^k\eta''(\omega)f(x,\omega)d\omega$ is also locally Lipschitz. Indeed, let $D\subset\R^d$ be any compact set, and $[a,b]\subset\R$ be a closed and bounded interval. Denote $S:=D\times[a,b]$. Let $L$ be the Lipschitz constant of $f$ on $D\times\left[\min\{k_0,a\},\max\{k_0,b\}\right]$. For every $(x,k),(x',k')\in S$, we have 
\begin{multline}
|G(x,k)-G(x',k')|=\left|\intop_{k_0}^k\eta''(\omega)f(x,\omega)d\omega-\intop_{k_0}^{k'}\eta''(\omega)f(x',\omega)d\omega\right|
\\
=\left|\intop_{k_0}^k\eta''(\omega)f(x,\omega)d\omega-\left(\intop_{k_0}^{k}\eta''(\omega)f(x',\omega)d\omega+\intop_{k}^{k'}\eta''(\omega)f(x',\omega)d\omega\right)\right|
\\
\leq \intop_{\min\{k_0,k\}}^{\max\{k_0,k\}}|\eta''(\omega)|\left|f(x,\omega)-f(x',\omega)\right|d\omega+\intop_{\min\{k,k'\}}^{\max\{k,k'\}}|\eta''(\omega)||f(x',\omega)|d\omega
\\
\leq L\left(\intop_{\min\{k_0,a\}}^{\max\{k_0,b\}}|\eta''(\omega)|d\omega\right)|x-x'|+\|\eta''\|_{L^\infty([a,b])}\|f\|_{L^\infty(S)}|k-k'|\leq C|(x,k)-(x',k')|,
\end{multline}
where $C$ is some constant independent of $x,x',k,k'$. It proves that $G$ is locally Lipschitz.

By identity $\eqref{eq:identity for integration by parts}$, we see that $q$ is a locally Lipschitz function in $\R^d\times\R$ as a product and sum of locally Lipschitz functions in $\R^d\times\R$. It completes the proof of part 1 of the proposition.

\item By \eqref{eq:identity for integration by parts} and differentiation under the sign of the integral (Lemma \ref{lem:derivation under the sign of the integral for Lipschitz functions}), we get for every $k\in \R$
\begin{equation}
\label{eq:description of partial derivative of q_n by integration by party}
D_xq(x,k)
=-\intop_{k_0}^k\eta''(\omega)D_xf(x,\omega)d\omega+\eta'(k)D_xf(x,k)-\eta'(k_0)D_xf(x,k_0),
\end{equation}
for almost every $x\in\R^d$.
By item 2a of Assumption \ref{ass:assumption on the flux}, the functions $k\mapsto \intop_{k_0}^k\eta''(\omega)D_xf(x,\omega)d\omega, k\mapsto\eta'(k)D_xf(x,k)$
are continuous for almost every
$x\in \R^d$. Therefore, the function $k\mapsto D_xq(x,k)$ is a continuous function for almost every $x\in \R^d.$

\item Assume the family $\{f(\cdot,k)\}_{k\in \R}$ is uniformly differentiable at $x_0\in \R^d$. Let $C\subset\R$ be any compact set which contains the interval between $k$ and $k_0$. Then, for every $(x,k)\in \R^d\times \R$, $x\neq x_0$, we get  
\begin{multline}
\frac{\left|q(x,k)-q(x_0,k)-\left(\intop_{k_0}^k\eta''(\omega)D_xf(x_0,\omega)d\omega\right)(x-x_0)\right|}{|x-x_0|}
\\
=\frac{\left|\intop_{k_0}^k\eta''(\omega)f(x,\omega)d\omega-\intop_{k_0}^k\eta''(\omega)f(x_0,\omega)d\omega-\left(\intop_{k_0}^k\eta''(\omega)D_xf(x_0,\omega)d\omega\right)(x-x_0)\right|}{|x-x_0|}
\\
\leq \int_{C}\eta''(\omega)\frac{\left|f(x,\omega)-f(x_0,\omega)-D_xf(x_0,\omega)(x-x_0)\right|}{|x-x_0|}d\omega
\\
\leq \mathcal{L}^1(C)\sup_{\omega\in C}\eta''(\omega)\sup_{\omega\in C}\frac{\left|f(x,\omega)-f(x_0,\omega)-D_xf(x_0,\omega)(x-x_0)\right|}{|x-x_0|}.
\end{multline}
By assumption $2b$ on the flux $f$, we get that the family $\{q(\cdot,k)\}_{k\in\R}$ is uniformly differentiable at $x_0$. Hence, by property 2a of the flux $f$ in Assumption \ref{ass:assumption on the flux}, we get the uniform differentiability almost everywhere of the family $\{q(\cdot,k)\}_{k\in\R}$.
\end{enumerate}
\end{proof}

\section{Entropy solutions for scalar conservation laws with Lipschitz continuous flux}
\label{sec:scalar conservation laws}

In this section, we introduce the main concepts of this work: scalar conservation laws and the notion of entropy solutions for such laws. 
 
For every $k\in\mathbb{R}$, we assume the existence of a function $f(\cdot,k):\mathbb{R}^d\to\mathbb{R}^d$, where $f = f(x,k)$. Let $u_{0}:\mathbb{R}^d\to\mathbb{R}$ be a function. Denote $I := (0,\infty)$.
Consider the Cauchy problem 
\begin{equation}
\label{eq: The Cauchy problem for the scalar conservation law}
\begin{cases}
\partial_{t}u\left(x,t\right)+\diver f\left(x,u\left(x,t\right)\right)=0 & ,(x,t)\in\mathbb{R}^d\times I\\
u\left(x,0\right)=u_{0}\left(x\right) & ,x\in\mathbb{R}^d
\end{cases}.
\end{equation}
Here, $u:\mathbb{R}^d\times I\to\mathbb{R}$ is the unknown function. The function $u_0$ represents the initial data. We refer to the equation $\partial_{t}u(x,t) + \text{div}\,f(x,u(x,t)) = 0$ as a \textbf{scalar conservation law}, and we call the function $f$ the \textbf{flux} of the law. The divergence operator $\diver$ is taken with respect to the spatial variable $x$. 

By Proposition \ref{prop:properties of the entropy flux} we can define a well-defined notion of an "entropy flux" as follows: 
\begin{defn}
\emph{\label{def:entropy pair R}(Entropy pair)}
Let $f$ be a flux as in Assumption \ref{ass:assumption on the flux}. For $C^{2}$ convex function $\eta:\mathbb{R}\to \mathbb{R}$
and $k_{0}\in\mathbb{R}$ let us define a
function
\begin{equation}
\label{eq:definition of the entropy filed q}
q\left(x,k\right):=\intop_{k_{0}}^{k}\eta'\left(\omega\right)\partial_{\omega}f\left(x,\omega\right)d\omega,\quad q:\R^d\times\R\to \R^d.
\end{equation}
We refer to the function $\eta$ as the \textbf{entropy} and the function $q$ as the \textbf{entropy flux}. The combination $\left(\eta, q\right)$ is termed an \textbf{entropy pair}.
\end{defn}

\begin{rem}(Uniqueness of derivatives of the entropy flux)
For a given $\eta$ as in Definition $\ref{def:entropy pair R}$, an entropy flux $q$ for which $(\eta, q)$ is an entropy pair is not unique, in general. Different choices of the number $k_0$ result in different entropy fluxes. More precisely, for $k_0, k_1$, we have
\begin{equation}
\intop_{k_{0}}^{k}\eta'\left(\omega\right)\partial_{\omega}f\left(x,\omega\right)d\omega=\intop_{k_{0}}^{k_1}\eta'\left(\omega\right)\partial_{\omega}f\left(x,\omega\right)d\omega+\intop_{k_1}^{k}\eta'\left(\omega\right)\partial_{\omega}f\left(x,\omega\right)d\omega.
\end{equation}  
However, by item 1 of Proposition \ref{prop:properties of the entropy flux} and Rademacher's Theorem we get for almost every
$(x,k)\in \R^d\times\R$
\begin{equation}
\partial_k\intop_{k_{0}}^{k}\eta'\left(\omega\right)\partial_{\omega}f\left(x,\omega\right)d\omega=\partial_k\intop_{k_1}^{k}\eta'\left(\omega\right)\partial_{\omega}f\left(x,\omega\right)d\omega.
\end{equation}
Which means that two different entropy fluxes for a given entropy $\eta$ have the same derivative with respect to the variable $k$ almost everywhere in $\R^d \times \R$.
\end{rem}

We now provide the definition of entropy solutions. Some notation involved in this definition will be explained in Remark \ref{rem:notation in the entropy inequality}.
\begin{defn}
\label{def:(entropy-solution)}(Entropy solution)
For $u_{0}\in L^{\infty}\left(\mathbb{R}^d\right)$ we say that
a function $u\in L^{\infty}\left(\mathbb{R}^d\times I\right)$, where $I=(0,\infty)$ is the time interval,
is an \textbf{entropy solution} for $\eqref{eq: The Cauchy problem for the scalar conservation law}$
if and only if the following two conditions hold:
\begin{enumerate}
\item For every entropy pair $\left(\eta,q\right)$ and test function
$0\leq\varphi\in \operatorname{Lip}_{c}\left(\mathbb{R}^d\times I\right)$\footnote{The space $\operatorname{Lip}_{c}\left(\mathbb{R}^d\times I\right)$ consists of Lipschitz functions in the variable $(x,t)\in \mathbb{R}^d\times I$ with compact
support.}
it follows that
\begin{multline}
\label{eq:entropy inequality}
\intop_{I}\intop_{\mathbb{R}^d}\bigg[\left(\partial_{t}\varphi\left(x,t\right)\right)\eta\left(u\left(x,t\right)\right)+\varphi\left(x,t\right)\bigg(\diver_xq\left(x,u\left(x,t\right)\right)-\eta'\left(u\left(x,t\right)\right)\diver_x f\left(x,u\left(x,t\right)\right)\bigg)
\\
+\left(\nabla_{x}\varphi\left(x,t\right)\right)\cdot q\left(x,u\left(x,t\right)\right)\bigg]dxdt
\geq0.
\end{multline}
Omitting variables in the above inequality gives
\begin{equation}
\intop_{I}\intop_{\mathbb{R}^d}\bigg[\partial_{t}\varphi\,\eta(u)+\varphi\bigg(\diver_xq\left(x,u\right)-\eta'(u)\diver_x f\left(x,u\right)\bigg)+\nabla_{x}\varphi\cdot q\left(x,u\right)\bigg]dxdt\geq0.
\end{equation}
We call the last inequality an
\textbf{entropy inequality}.

\item There exists a set $\Psi\subset I$ such that $\mathcal{L}^1(\Psi)=0$ and for every ball $B_R(0)=\Set{x\in\R^N}[|x|<R<\infty]$, we have
\begin{equation}
\label{eq:continuity from the right at 0}
\lim_{\underset{t\in I\setminus\Psi}{t\to 0^+}}\int_{B_R(0)}|u(x,t)-u_0(x)|dx=0.
\end{equation}
\end{enumerate}
\end{defn}

\begin{rem}(Remarks about the definition of entropy solutions)
\label{rem:notation in the entropy inequality}
\begin{enumerate}

\item Note that by Rademacher's Theorem, the test function $\varphi$ has locally bounded partial derivatives $\partial_t\varphi,\nabla_x \varphi$. 

\item By Definition 
$\ref{def:entropy pair R},$
the entropy
$\eta:\R \to \R$
is differentiable and we denote its derivative by $\eta'$ and 
in the entropy inequality 
$\eqref{eq:entropy inequality}$
the amount 
$\eta'(u(x,t))$
is the derivative of 
$\eta$
at the point 
$u(x,t).$

\item We denote by
$\diver_xf$
the divergence of the flux 
$f:\R^d\times\R\to \R^d,f=f(x,k),$
with respect to
the explicit dependence on the variable 
$x$. In the entropy inequality \eqref{eq:entropy inequality}, 
$\diver_x f(x,u(x,t))=\diver f(x,k)|_{k=u(x,t)}$ and $\diver_x q(x,u(x,t))=\diver q(x,k)|_{k=u(x,t)}$. 

\item Notice that according to items 1,2a of Assumption \ref{ass:assumption on the flux}, Proposition \ref{prop:properties of the entropy flux} and $u\in L^\infty(\R^d\times I)$, we get that the functions $\diver_x f(x,u(x,t))$, $q(x,u(x,t))$, $\diver_xq(x,u(x,t))$ are defined for almost every $(x,t)\in \R^d\times I$ and are locally bounded. 
Note that property 2a is required to ensure that the functions  
\[
(x,t) \mapsto \diver_x f(x,u(x,t)), \quad (x,t) \mapsto \diver_x q(x,u(x,t))
\]  
are defined almost everywhere in \(\mathbb{R}^d \times I\).

\item In the entropy inequality \eqref{eq:entropy inequality}, the term $\nabla_{x}\varphi\cdot q\left(x,u\right)$ stands for the inner product between the vectors $\nabla_{x}\varphi(x,t),\,q\left(x,u(x,t)\right)\in \R^d$.

\item Note that the set $\Psi$ in the second condition of Definition \ref{def:(entropy-solution)} is independent of the choice of the ball $B_R(0)$.
\end{enumerate}
\end{rem}

\textbf{Discussion about the entropy inequality}\\
Let us discuss the entropy inequality (\ref{eq:entropy inequality}).
Let us assume the flux $f$ is $C^1$ and the existence of a $C^1$ solution $u=u\left(x,t\right)$ for $\eqref{eq: The Cauchy problem for the scalar conservation law}$, and let $\left(\eta,q\right)$
be an entropy pair as in Definition \ref{def:entropy pair R}. From the scalar conversation law we get
\begin{equation}
\label{eq:using the scalar law}
\partial_t u=-\diver f(x,u)=-\diver_x f\left(x,u\right)-\partial_{k}f(x,k)|_ {k=u(x,t)}\cdot \nabla_{x}u,
\end{equation}
and by the definition of entropy pair (Definition \ref{def:entropy pair R}), we get
\begin{equation}
\label{eq:derivative of the entropy flux}
\partial_{k}q(x,k)|_{k=u(x,t)}=\eta'\left(u\right)\partial_{k}f(x,k)|_{k=u(x,t)}.
\end{equation} 
Therefore, by \eqref{eq:using the scalar law} and \eqref{eq:derivative of the entropy flux} we get
\begin{multline}
\partial_{t}\eta\left(u\right)+\diver q\left(x,u\right)=\eta'\left(u\right)\partial_{t}u+\diver_xq\left(x,u\right)+\partial_{k}q(x,k)|_{k=u(x,t)}\cdot \nabla_{x}u
\\
=-\eta'\left(u\right)\bigg(\diver_x f\left(x,u\right)+\partial_{k}f(x,k)|_{k=u(x,t)}\cdot \nabla_{x}u\bigg)+\diver_xq\left(x,u\right)+\eta'\left(u\right)\partial_{k}f(x,k)|_{k=u(x,t)}\cdot\nabla_{x}u
\\
=-\eta'\left(u\right)\diver_x f\left(x,u\right)+\diver_xq\left(x,u\right).
\end{multline}
Thus,
\begin{equation}
\partial_{t}\eta\left(u\right)+\diver q\left(x,u\right)+\eta'\left(u\right)\diver_x f\left(x,u\right)-\diver_xq\left(x,u\right)=0.
\end{equation}
In particular,
\begin{equation}
\partial_{t}\eta\left(u\right)+\diver q\left(x,u\right)+\eta'\left(u\right)\diver_x f\left(x,u\right)-\diver_xq\left(x,u\right)\leq0.
\end{equation}
By multiplying the last inequality by test function $\varphi$ as in the definition of the entropy solution (Definition
\ref{def:(entropy-solution)}) and using integration by parts we
get the entropy inequality (\ref{eq:entropy inequality}). 

We give now an approximation lemma which tells us that we can take
in the inequality (\ref{eq:entropy inequality}) an "entropy pair" $(\eta,q)$ such that the entropy $\eta$ is not $C^2$.  
\begin{lem}(Validity of the entropy inequality for non-regular entropy $\eta$) 
\label{lem: approximation lemma R}
Let $u$ be an entropy solution, as defined in Definition \ref{def:(entropy-solution)}, for the Cauchy problem \eqref{eq: The Cauchy problem for the scalar conservation law}. Fix $k_{0}\in\R$ and define functions $\eta:\R\to\R$ and $q:\R^d\times\R\to\R^d$ as follows:
\begin{equation}
\label{eq:definition of eta}
\eta\left(k\right):=\left|k-k_{0}\right|,
\end{equation}
\begin{equation}
\label{eq:definition of q}
q\left(x,k\right):=\sign(k-k_{0})\left(f\left(x,k\right)-f\left(x,k_{0}\right)\right).
\end{equation}
Here, $f$ is a function satisfies items $1,2a$ of Assumption \ref{ass:assumption on the flux}. Then, the pair $\left(\eta,q\right)$ satisfies the entropy inequality (\ref{eq:entropy inequality}) together with any $0\le\varphi\in \operatorname{Lip}_{c}\left(\R^d\times I\right)$, $I=(0,\infty)$. The pair $(\eta,q)$ is called \textbf{Kruzkov's entropy pair}.
\end{lem}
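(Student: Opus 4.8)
The strategy is a standard mollification argument: approximate the non-smooth Kruzkov entropy $\eta(k)=|k-k_0|$ by a sequence of $C^2$ convex functions $\eta_\varepsilon$, apply the entropy inequality \eqref{eq:entropy inequality} (which is available for $C^2$ entropies by hypothesis), and pass to the limit $\varepsilon\to 0^+$. Concretely, I would fix a $C^2$ convex approximation of the absolute value, e.g. $\eta_\varepsilon(k):=\sqrt{(k-k_0)^2+\varepsilon^2}-\varepsilon$ or a mollification of $|k-k_0|$, so that $\eta_\varepsilon\to\eta$ uniformly on compacts, $\eta_\varepsilon'\to\sign(\cdot-k_0)$ pointwise (away from $k_0$) with $\|\eta_\varepsilon'\|_\infty\le 1$, and $\eta_\varepsilon''\ge 0$. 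For each $\varepsilon$, let $q_\varepsilon(x,k):=\intop_{k_0}^k\eta_\varepsilon'(\omega)\partial_\omega f(x,\omega)\,d\omega$ be the associated entropy flux, so that $(\eta_\varepsilon,q_\varepsilon)$ is a genuine entropy pair and \eqref{eq:entropy inequality} holds with a fixed test function $0\le\varphi\in\operatorname{Lip}_c(\R^d\times I)$.

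The key step is to identify the limits of each of the three terms in \eqref{eq:entropy inequality}. The first term $\partial_t\varphi\,\eta_\varepsilon(u)$ converges to $\partial_t\varphi\,\eta(u)$ by dominated convergence, since $u\in L^\infty$ and the integrand is supported in the compact set $\Supp\varphi$, so all quantities stay in a fixed compact range of $k$. For the flux terms I would use the integration-by-parts representation from Proposition \ref{prop:properties of the entropy flux}, writing
\[
q_\varepsilon(x,k)=-\intop_{k_0}^k\eta_\varepsilon''(\omega)f(x,\omega)\,d\omega+\eta_\varepsilon'(k)f(x,k)-\eta_\varepsilon'(k_0)f(x,k_0),
\]
and similarly (differentiating under the integral sign via Lemma \ref{lem:derivation under the sign of the integral for Lipschitz functions}, valid a.e. in $x$) an analogous formula for $\diver_x q_\varepsilon$. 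One then checks that $q_\varepsilon(x,u(x,t))\to \sign(u-k_0)(f(x,u)-f(x,k_0))=q(x,u(x,t))$ and $\diver_x q_\varepsilon(x,u(x,t))\to \sign(u-k_0)(\diver_x f(x,u)-\diver_x f(x,k_0))$ for a.e. $(x,t)$, again with a uniform $L^\infty_{\mathrm{loc}}$ bound so that dominated convergence applies. The term $\eta_\varepsilon'(u)\diver_x f(x,u)$ converges to $\sign(u-k_0)\diver_x f(x,u)$ pointwise a.e. and is dominated since $|\eta_\varepsilon'|\le 1$ and $\diver_x f(\cdot,u(\cdot))\in L^\infty_{\mathrm{loc}}$.

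The main obstacle is the set $\{u(x,t)=k_0\}$, where $\eta_\varepsilon'(u)$ and $\sign(u-k_0)$ need not converge: one must verify that the limiting integrand is well-defined a.e. and that nothing is lost there. The clean way around this is to observe that on $\{u=k_0\}$ one has $q_\varepsilon(x,u)=0$ for all $\varepsilon$ (the integral is over a degenerate interval), $\diver_x q_\varepsilon(x,u)=0$ a.e. there, and $\eta_\varepsilon'(u)\diver_x f(x,u)\to 0$ boundedly (since $|\eta_\varepsilon'(k_0)|\to 0$ for the symmetric choice, or more robustly since the whole combination $\diver_x q_\varepsilon - \eta_\varepsilon'\diver_x f$ tends to $0$ there), which is consistent with the convention $\sign(0)=0$ used in \eqref{eq:definition of q}; meanwhile $\partial_t\varphi\,\eta_\varepsilon(u)\to 0$ on that set as well. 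Assembling these limits, the left-hand side of \eqref{eq:entropy inequality} for $(\eta_\varepsilon,q_\varepsilon)$ converges to the left-hand side for Kruzkov's pair $(\eta,q)$, and since each approximate integral is $\ge 0$, so is the limit. A brief remark should be added that condition 2 of Definition \ref{def:(entropy-solution)} plays no role here, since Kruzkov's pair enters only through the entropy inequality.
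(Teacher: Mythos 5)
Your proposal follows essentially the same route as the paper's proof: approximate $\eta$ by $C^{2}$ convex entropies $\eta_{\varepsilon}$ with $\eta_{\varepsilon}'(k_{0})=0$, use the integration-by-parts representation of $q_{\varepsilon}$ and $\diver_{x}q_{\varepsilon}$ from Proposition \ref{prop:properties of the entropy flux} and Lemmas \ref{lem:derivation under the sign of the integral for Lipschitz functions}, \ref{lem:integration by parts for Lipschitz functions}, and pass to the limit by dominated convergence, and every limit you state (including $\diver_{x}q_{\varepsilon}(x,u)\to\sign(u-k_{0})\left(\diver_{x}f(x,u)-\diver_{x}f(x,k_{0})\right)$) is exactly the one the paper establishes. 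The only step you dispose of with ``one then checks'' is in fact the crux of the paper's argument: since $\eta_{\varepsilon}''\geq 0$ concentrates its total mass $\eta_{\varepsilon}'(k)-\eta_{\varepsilon}'(k_{0})\to\sign(k-k_{0})$ at $k_{0}$, the term $\intop_{k_{0}}^{k}\eta_{\varepsilon}''(\omega)\diver_{x}f(x,\omega)\,d\omega$ converges to $\sign(k-k_{0})\diver_{x}f(x,k_{0})$ precisely because $\omega\mapsto\diver_{x}f(x,\omega)$ is continuous for almost every $x$ (item 2a of Assumption \ref{ass:assumption on the flux}), and making that splitting argument explicit is where the hypothesis is actually consumed.
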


\begin{proof}
The derivative of $\eta$ is given by $\eta'(k)=\sign(k-k_0)$ for every $k\neq k_0$ and we define $\eta'(k_0):=0$.
Consider a sequence of $C^{2}$ convex functions $\eta_{n}:\R\to\R$
such that
\begin{equation}
\label{eq:properties for eta n}
\begin{cases}
\eta_{n}\to\eta\quad
\text{in}\quad  L_{\text{loc}}^{\infty}\left(\R\right),\\
\eta_{n}'(k)\to \sign\left(k-k_0\right), \forall k\in \R,\\
\eta_{n}'(k_0)=0,\quad \forall n\in \N,\\
\exists 0<C<\infty \Longrightarrow \left\Vert \eta_{n}'\right\Vert _{L^{\infty}\left(\R\right)}\leq C,\forall n\in \N
\end{cases}.
\end{equation}
One can take, for example,
$\eta_{n}\left(k\right):=\sqrt{\left(k-k_{0}\right)^{2}+\frac{1}{n}}$.
For each $\eta_{n}$ define
\begin{equation}
q_{n}\left(x,k\right):=\int_{k_{0}}^{k}\eta_{n}'\left(w\right)\partial_{w}f\left(x,w\right)dw.
\end{equation}
Hence, $\left(\eta_{n},q_{n}\right)$
is an entropy pair. Since $u$
is an entropy solution, we obtain for
$0\le\varphi\in \operatorname{Lip}_{c}\left(\R^d\times I\right)$:

\begin{equation}
\label{eq:entropy inequlity for convex pair (eta_n,q_n)}
\intop_{I}\intop_{\mathbb{R}^d}\Big[\partial_{t}\varphi\,\eta_{n}\left(u\right)+\varphi\,\bigg(\diver_xq_{n}\left(x,u\right)-\eta_n'\left(u\right)\diver_x f\left(x,u\right)\bigg)+\nabla_{x}\varphi\cdot q_{n}\left(x,u\right)\Big]dxdt\geq 0.
\end{equation}
We intend to take the limit as $n\to \infty$ in \eqref{eq:entropy inequlity for convex pair (eta_n,q_n)}.

In the following, we use the following notations. We denote by $\Supp(\varphi)$ the support of $\varphi$; since $\varphi\in \operatorname{Lip}_{c}(\R^d\times I)$, then $\Supp(\varphi)$ is a compact set in $\R^d\times I$. We denote by $\image u$ the image of $u$; since $u\in L^\infty(\R^d\times I)$, then $\image u$ is contained in a compact set in $\R$ outside a set of measure zero. We denote by $P_x:\R^d\times \R\to \R^d$, $P_x(x,k):=x$, the projection onto $\R^d$; $P_x$ takes compact sets in $\R^d\times \R$ to compact sets in $\R^d$.

We prove that:
\begin{equation}
\label{eq:limit of double integral involving eta_n}
\lim_{n\to \infty}\intop_{I}\intop_{\mathbb{R}^d}\partial_{t}\varphi\,\eta_{n}(u)dxdt=\intop_{I}\intop_{\mathbb{R}^d}\partial_{t}\varphi\,\eta(u)dxdt.
\end{equation}
Note that
\begin{equation}
\label{eq:limit of double integral involving eta_n 1}
\intop_{I}\intop_{\mathbb{R}^d}|\partial_{t}\varphi|\,\left|\eta_{n}(u)-\eta(u)\right|dxdt\leq \mathcal{L}^{d+1}(\Supp(\varphi))\|\partial_t\varphi\|_{L^\infty(\Supp(\varphi))}\|\eta_n-\eta\|_{L^\infty(\image u)}.
\end{equation}

Since $\eta_{n}\to\eta$ in $L_{\text{loc}}^{\infty}(\R)$, we obtain \eqref{eq:limit of double integral involving eta_n}. 

We prove that:
\begin{equation}
\label{eq:a limit of double integral involving eta_n' and q_n-part 1}
\lim_{n\to \infty}\intop_{I}\intop_{\mathbb{R}^d}\varphi\,\eta_n'\left(u\right)\diver_x f\left(x,u\right)dxdt
=\intop_{I}\intop_{\mathbb{R}^d}\varphi\,\eta'\left(u\right)\diver_x f\left(x,u\right)dxdt.
\end{equation}
According to Remark \ref{rem:notation in the entropy inequality}, the function $(x,t)\mapsto \diver_xf(x,u(x,t))$ lies in the space $L^\infty_{\text{loc}}(\R^d\times I)$.
According to the convergence $\eta_{n}'(k)\to \eta'(k):=\sign(k-k_0)$ for all $k\in \R$ and the existence of a constant $C$ such that $\left\Vert \eta_{n}'\right\Vert _{L^{\infty}(\R)}\leq C$ for every $n\in\N$, we can apply the Dominated Convergence Theorem to get \eqref{eq:a limit of double integral involving eta_n' and q_n-part 1}.

We now prove that:
\begin{equation}
\label{eq:a limit of double integral involving eta_n' and q_n-part 2}
\lim_{n\to \infty}\intop_{I}\intop_{\mathbb{R}^d}\nabla_{x}\varphi\cdot q_{n}\left(x,u\right)dxdt
=\intop_{I}\intop_{\mathbb{R}^d}\nabla_{x}\varphi\cdot q\left(x,u\right)dxdt.
\end{equation}

We first prove that, for almost every $(x,t)\in\R^d\times I$ 
\begin{equation}
\label{eq:limit of qn in u to q in u}
\lim_{n\to \infty}q_{n}\left(x,u(x,t)\right)=q(x,u(x,t)).
\end{equation}

Since $f$ is locally Lipschitz, for every $x\in \R^d$, the function $f_x(w):=f(x,w)$ is locally Lipschitz in $\R$. Therefore, $\partial_wf(x,\cdot)=f'_x\in L^\infty_{\text{loc}}(\R)$. By the Dominated Convergence Theorem and the Fundamental Theorem of Calculus, we have for every $k\in\R$ and every $x\in \R^d$
\begin{multline}
\lim_{n\to \infty}q_{n}\left(x,k\right)=\intop_{k_{0}}^{k}\lim_{n\to \infty}\eta_{n}'\left(w\right)\partial_{w}f\left(x,w\right)dw=\intop_{k_{0}}^{k}\sign(w-k_0)\partial_{w}f\left(x,w\right)dw
\\
=\sign(k-k_0)\intop_{k_{0}}^{k}\partial_{w}f(x,w)dw=\sign(k-k_0)\left(f(x,k)-f(x,k_0)\right)=q(x,k).
\end{multline}
It proves \eqref{eq:limit of qn in u to q in u}.
Now, note that for every $n\in\N$ and $k\in\R$, we get the estimate
\begin{equation}
\label{eq:upper bound on a sequence of entropy fluxes}
|q_n(x,k)|\leq C\int_{\min\{k_{0},k\}}^{\max\{k_{0},k\}}|\partial_{w}f\left(x,w\right)|dw.
\end{equation}
Since $f$ is locally Lipschitz in $\R^d\times\R$, we get by Rademacher's Theorem that the function 
\begin{equation}
\label{eq:local integrability of F}
F(x,t):=\int_{\min\{k_{0},u(x,t)\}}^{\max\{k_{0},u(x,t)\}}|\partial_{w}f\left(x,w\right)|dw
\end{equation}
lies in the space $L^1_{\text{loc}}(\R^d\times I)$. Therefore, by \eqref{eq:limit of qn in u to q in u}, \eqref{eq:upper bound on a sequence of entropy fluxes}, \eqref{eq:local integrability of F} and Dominated Convergence Theorem, we obtain \eqref{eq:a limit of double integral involving eta_n' and q_n-part 2}.

We now prove that:
\begin{equation}
\label{eq:a limit of double integral involving spatial derivative of q_n}
\lim_{n\to\infty}\intop_{I}\intop_{\R^d}\varphi\,\diver_xq_{n}(x,u)dxdt=\intop_{I}\intop_{\R^d}\varphi\,\diver_xq(x,u)dxdt.
\end{equation}

First we prove that for almost every $(x,t)\in\R^d\times I$ we get
\begin{equation}
\label{eq:limit of spatial derivative of q_n}
\lim_{n\to \infty}\diver_xq_n(x,u(x,t))=\diver_xq(x,u(x,t)).
\end{equation}
For this purpose, let us prove the existence of a set $\Theta\subset\R^d$, $\mathcal{L}^d(\Theta)=0$, such that for every $x\in \R^d\setminus\Theta$ and $k\in \R$, we have
\begin{equation}
\label{eq:limit of spatial derivative of q_n with k}
\lim_{n\to \infty}\diver q_n(x,k)=\diver q(x,k).
\end{equation}
From \eqref{eq:limit of spatial derivative of q_n with k}, we deduce \eqref{eq:limit of spatial derivative of q_n}
because for every $(x,t)\in \R^d\times I$ such that $(x,t)$ lies in the domain of $u$ and $(x,t)\notin \Theta\times I$, we obtain \eqref{eq:limit of spatial derivative of q_n} for $x$ and $k=u(x,t)$.
Let $\Theta$ be as in item 2a of Assumption \ref{ass:assumption on the flux}. For all 
$k\in \R$
and for every $x\in \R^d\setminus\Theta$ 
we obtain
\begin{multline}
\label{eq:description of partial derivative of q_n by integration by party}
\diver q_n(x,k)=
\diver \left[\intop_{k_0}^k\eta'_n(w)\partial_w f(x,w)dw\right]
\\
=\diver \left[-\intop_{k_0}^k\eta''_n(w) f(x,w)dw+\eta_n'(k)f(x,k)-\eta_n'(k_0)f(x,k_0)\right]
\\
=-\intop_{k_0}^k\eta''_n(w) \diver f(x,w)dw+\eta_n'(k)\diver f(x,k).
\end{multline}
In the last equality of \eqref{eq:description of partial derivative of q_n by integration by party} we use $\eta_n'(k_0)=0$. The integration by parts and the differentiation under the sign of the integral follows from Lemmas \ref{lem:derivation under the sign of the integral for Lipschitz functions}, \ref{lem:integration by parts for Lipschitz functions}. Let us analyse the integral on the right hand side of \eqref{eq:description of partial derivative of q_n by integration by party}. More precisely, we show that
\begin{equation}
\label{eq:limit result involved eta''}
\lim_{n\to \infty}\intop_{k_0}^k\eta''_n(w) \diver f(x,w)dw=\sign(k-k_0)\diver f(x,k_0).
\end{equation}

 Let $k\in \R$. Let $x\in \R^d$ be such that $w\longmapsto\diver f(x,w)$ is continuous (almost every $x\in \R^d$ has this property according to item 2a of Assumption \ref{ass:assumption on the flux}). So it is continuous at $k_0$. Let $\epsilon\in(0,\infty).$ There exists $\delta\in(0,\infty)$ such that $|\diver f(x,w)-\diver f(x,k_0)|<\epsilon$ whenever $|w-k_0|<\delta$. Assume that $k>k_0$. It follows that
\begin{multline}
\label{eq:formula1}
\intop_{k_0}^k\eta''_n(w) \diver  f(x,w)dw=\intop_{k_0}^k\eta''_n(w) \diver f(x,k_0)dw-\intop_{k_0}^k\eta''_n(w) \left(\diver f(x,k_0)-\diver f(x,w)\right)dw
\\
=\diver f(x,k_0)\left(\eta'_n(k)-\eta'_n(k_0)\right)
-\intop_{k_0+\delta}^k\eta''_n(w) \left(\diver f(x,k_0)-\diver f(x,w)\right)dw
\\
-\intop_{k_0}^{k_0+\delta}\eta''_n(w) \left(\diver f(x,k_0)-\diver f(x,w)\right)dw.
\end{multline}
Observe that since 
$\eta_n$ 
is convex we know that 
$\eta_n''$ 
is a nonnegative function. By the Fundamental Theorem of Calculus we get  
\begin{multline}
\label{eq:formula2}
\limsup_{n\to \infty}\intop_{k_0+\delta}^{k}\eta''_n(w) |\diver f(x,k_0)-\diver f(x,w)|dw
\\
\leq  \sup_{w\in [k_0,k]}|\diver f(x,k_0)-\diver f(x,w)|\lim_{n\to \infty}\left(\eta'_n(k)-\eta'_n(k_0+\delta)\right)=0.
\end{multline}
In addition,
\begin{equation}
\label{eq:formula3}
\limsup_{n\to \infty}\intop_{k_0}^{k_0+\delta}\eta''_n(w) |\diver f(x,k_0)-\diver f(x,w)|dw\leq \epsilon\lim_{n\to \infty}\left(\eta'_n(k_0+\delta)-\eta'_n(k_0) \right)=\epsilon.
\end{equation}
Therefore, we get \eqref{eq:limit result involved eta''} from \eqref{eq:formula1}, \eqref{eq:formula2} and \eqref{eq:formula3}.

Taking the limit as $n\to \infty$ in $\eqref{eq:description of partial derivative of q_n by integration by party}$, and taking into account $\eqref{eq:limit result involved eta''}$, we obtain \eqref{eq:limit of spatial derivative of q_n with k}.
The case $k<k_0$ is similar. It completes the proof of \eqref{eq:limit of spatial derivative of q_n}. 

Next, according to
$\eqref{eq:description of partial derivative of q_n by integration by party}$, 
we get for almost every 
$(x,t)\in \R^d\times I$
\begin{align}
\label{eq:eq1 for uniform boundedness of q_n}
\diver_xq_n(x,u(x,t))=
-\intop_{k_0}^{u(x,t)}\eta''_n(w) \diver f(x,w)dw+\eta_n'(u(x,t))\diver_x f(x,u(x,t)).
\end{align}
Notice that, since $u$ is bounded and $f$ is locally Lipschitz, the sequence of functions $(x,t)\longmapsto \diver_xq_n(x,u(x,t))$ is uniformly bounded on compact sets in $\R^d\times I$, which allows us to use the Dominated Convergence Theorem to get \eqref{eq:a limit of double integral involving spatial derivative of q_n}. 

Taking the limit as $n\to \infty$ in
$\eqref{eq:entropy inequlity for convex pair (eta_n,q_n)}$ and taking into account 
\eqref{eq:limit of double integral involving eta_n},
\eqref{eq:a limit of double integral involving eta_n' and q_n-part 1},
\eqref{eq:a limit of double integral involving eta_n' and q_n-part 2}
and \eqref{eq:a limit of double integral involving spatial derivative of q_n}, we see that the pair $(\eta,q)$, as defined in \eqref{eq:definition of eta},\eqref{eq:definition of q}, satisfies the entropy inequality $\eqref{eq:entropy inequality}.$   
\end{proof}

\section{$L^1$-contraction property}
\label{sec:L1-contraction property}
In this section, we prove several contraction properties.  

Recall the Lebesgue Differentiation Theorem \cite{evans2015measure}:
\begin{thm}(Lebesgue Differentiation Theorem)
\label{thm:Lebesgue differentiation theorem}
Let $h\in L^1_{\text{loc}}(\R^d)$. Then,  almost every point in $\R^d$ is a Lebesgue point of $h$, i.e. a point $x\in\mathbb{R}^{d}$
such that
\begin{equation}
\lim_{\epsilon\to 0^+}\fintop_{B_{\epsilon}\left(x\right)}\left|h\left(x\right)-h\left(y\right)\right|dy:=\lim_{\epsilon\to 0^+}\frac{1}{\mathcal{L}^d(B_{\epsilon}\left(x\right))}\intop_{B_{\epsilon}\left(x\right)}\left|h\left(x\right)-h\left(y\right)\right|dy=0.
\end{equation}
Here $B_{\epsilon}(x)$
is a closed ball in $\mathbb{R}^{d}$ centred at $x$ with radius $\epsilon$.
\end{thm}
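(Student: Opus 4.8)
The plan is to use the classical argument via the Hardy--Littlewood maximal function, reducing the statement to a weak-type $(1,1)$ estimate obtained from the Vitali covering lemma. Since being a Lebesgue point of $h$ at $x$ depends only on the values of $h$ in a neighbourhood of $x$, I first reduce to the case $h\in L^1(\R^d)$: for a fixed $R\in(0,\infty)$ let $h_R$ be the function equal to $h$ on $B_{R+1}(0)$ and to $0$ elsewhere; then $h_R\in L^1(\R^d)$ and $h_R=h$ on $B_R(0)$, so it suffices to show that almost every $x\in B_R(0)$ is a Lebesgue point of $h_R$, and then let $R\to\infty$ along the integers. From now on assume $h\in L^1(\R^d)$.

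For $g\in L^1(\R^d)$ introduce the oscillation function and the maximal function
\begin{equation}
\Omega g(x):=\limsup_{\epsilon\to 0^+}\fintop_{B_\epsilon(x)}|g(y)-g(x)|\,dy,\qquad Mg(x):=\sup_{\epsilon\in(0,\infty)}\fintop_{B_\epsilon(x)}|g(y)|\,dy.
\end{equation}
The goal is to prove $\Omega h=0$ almost everywhere. Two ingredients are needed. First, if $g$ is continuous with compact support then $\Omega g\equiv 0$, which is immediate from the continuity of $g$. Second, the weak-type $(1,1)$ bound: there is a constant $C_d\in(0,\infty)$ depending only on $d$ such that $\mathcal{L}^d(\{Mg>\lambda\})\le \frac{C_d}{\lambda}\,\|g\|_{L^1(\R^d)}$ for every $\lambda\in(0,\infty)$. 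I would prove the latter by the Vitali covering lemma: given a compact set $K\subset\{Mg>\lambda\}$, each $x\in K$ lies in a ball $B_{r_x}(x)$ with $\int_{B_{r_x}(x)}|g|>\lambda\,\mathcal{L}^d(B_{r_x}(x))$; extract a finite subcover of $K$, then a pairwise disjoint subfamily $B_{r_1}(x_1),\dots,B_{r_m}(x_m)$ whose threefold dilates cover $K$, so that $\mathcal{L}^d(K)\le 3^d\sum_{i}\mathcal{L}^d(B_{r_i}(x_i))\le \frac{3^d}{\lambda}\sum_i\int_{B_{r_i}(x_i)}|g|\le \frac{3^d}{\lambda}\,\|g\|_{L^1(\R^d)}$; inner regularity of $\mathcal{L}^d$ then yields the bound with $C_d=3^d$.

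To conclude, fix $\lambda\in(0,\infty)$. For an arbitrary $\xi\in(0,\infty)$, pick by density of compactly supported continuous functions in $L^1(\R^d)$ a continuous $g$ with compact support and $\|h-g\|_{L^1(\R^d)}<\xi$. Since $\Omega g\equiv 0$, the triangle inequality applied inside the averages gives the pointwise estimate $\Omega h\le \Omega(h-g)\le M(h-g)+|h-g|$. Hence, using Chebyshev's inequality for the second set,
\begin{equation}
\mathcal{L}^d\big(\{\Omega h>2\lambda\}\big)\le \mathcal{L}^d\big(\{M(h-g)>\lambda\}\big)+\mathcal{L}^d\big(\{|h-g|>\lambda\}\big)\le \frac{C_d}{\lambda}\,\xi+\frac{1}{\lambda}\,\xi.
\end{equation}
As $\xi$ is arbitrary, $\mathcal{L}^d(\{\Omega h>2\lambda\})=0$; letting $\lambda\to 0^+$ along a sequence shows $\Omega h=0$ almost everywhere, i.e. almost every $x\in\R^d$ is a Lebesgue point of $h$. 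The one genuinely nonroutine step is the covering argument behind the weak-type $(1,1)$ bound; the remainder is bookkeeping with the triangle and Chebyshev inequalities together with the density of continuous functions.
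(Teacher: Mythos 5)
The paper does not prove this statement: it is recalled as a classical theorem and attributed to the reference \cite{evans2015measure}, so there is no in-paper proof to compare against. Your argument is the standard and correct proof via the Hardy--Littlewood maximal function, the Vitali covering lemma, and density of $C_c(\R^d)$ in $L^1(\R^d)$, with the reduction from $L^1_{\text{loc}}$ to $L^1$ handled properly; it is a perfectly acceptable self-contained substitute for the citation.
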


Recall that the \textbf{$d-$dimensional standard
mollifier kernel $\rho_{\epsilon}$} on $\R^{d}$ is given by
\begin{equation}
\rho_{\epsilon}\left(x\right):=\frac{1}{\epsilon^{d}}\rho\left(\frac{x}{\epsilon}\right),\quad \rho\left(x\right)=\begin{cases}
C\,\exp\left(\frac{1}{\left|x\right|^{2}-1}\right) & \left|x\right|<1\\
0 & \left|x\right|\geq1
\end{cases},
\label{eq: standard mollifier kernel}
\end{equation}
where $C>0$ is a constant such that $\intop_{\R^{d}}\rho\left(x\right)dx=1$.
\\
\\
The following lemma is the main result of this paper, in which we establish a localized contraction property, sometimes referred to as Kato's inequality.
\begin{lem}[Localized contraction property for entropy solutions]
\label{lem:Kato's inequality}
Let $u,\tilde{u}$ be entropy solutions as in definition \ref{def:(entropy-solution)}. Let $(\eta,q)$ be the entropy pair defined by 
\begin{equation}
\label{eq:definition of eta and the entropy flux in the contraction property}
\eta=\eta\left(k_{1},k_{2}\right)=\left|k_{1}-k_{2}\right|,\quad q=q\left(x,k_{1},k_{2}\right)=\sign\left(k_{1}-k_{2}\right)\left(f\left(x,k_{1}\right)-f\left(x,k_{2}\right)\right),\quad k_1,k_2\in \R,
\end{equation}
where $f$ is a flux as in Assumption \ref{ass:assumption on the flux}. Then, for every $0\leq \psi\in \operatorname{Lip}_{c}\left(\mathbb{R}^d\times I\right)$ 
\begin{equation}
\label{eq:final term, in statement of theorem}
\intop_{I}\intop_{\mathbb{R}^d}\bigg[\partial_t\psi(x,t)\eta\left(u\left(x,t\right),\tilde{u}\left(x,t\right)\right)+\nabla_x\psi(x,t)\cdot q\left(x,u\left(x,t\right),\tilde{u}\left(x,t\right)\right)\bigg]dxdt
\geq 0.
\end{equation}
\end{lem}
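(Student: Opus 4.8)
The plan is to follow Kruzkov's doubling-of-variables argument, adapted to the present low-regularity setting by exploiting Lemma \ref{lem: approximation lemma R} and the properties of the entropy flux established in Proposition \ref{prop:properties of the entropy flux}. First I would apply Lemma \ref{lem: approximation lemma R} twice: for fixed $(y,s)$, the function $(x,t)\mapsto u(x,t)$ satisfies the entropy inequality with Kruzkov's entropy pair relative to the constant $k_0=\tilde u(y,s)$, and symmetrically $(y,s)\mapsto\tilde u(y,s)$ satisfies it relative to $k_0=u(x,t)$. Then I would choose a test function of the doubled form $\varphi(x,t,y,s)=\psi\!\left(\frac{x+y}{2},\frac{t+s}{2}\right)\rho_{\epsilon}\!\left(\frac{x-y}{2}\right)\delta_{\epsilon}\!\left(\frac{t-s}{2}\right)$, where $\rho_\epsilon,\delta_\epsilon$ are standard mollifiers in space and time, plug it into both inequalities (with the appropriate frozen variable playing the role of the constant $k_0$), integrate the first over $(y,s)$ and the second over $(x,t)$, and add the two. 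The left-hand side then splits into a ``main'' part that will converge, as $\epsilon\to0$, to the desired expression $\iint[\partial_t\psi\,\eta(u,\tilde u)+\nabla_x\psi\cdot q(x,u,\tilde u)]\,dx\,dt$, plus several error terms.

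The key bookkeeping is the cancellation of derivatives hitting the mollifiers. The combination $\partial_t\varphi$ produced from the two inequalities has the cross-derivative terms in the mollifier arguments cancel, leaving only $\partial_t\psi$ evaluated at the midpoint times the mollifier product; the same happens for $\nabla_x\varphi$ applied to the flux terms, because $q$ is antisymmetric in $(k_1,k_2)$ while $q(x,u(x,t),\tilde u(y,s))$ and $q(y,\tilde u(y,s),u(x,t))$ are compared — here I would use the local Lipschitz continuity of $f$ (hence of $q$) in $x$ to replace $q(y,\cdot,\cdot)$ by $q(x,\cdot,\cdot)$ up to an $O(\epsilon)$ error, since $|x-y|\lesssim\epsilon$ on the support of $\rho_\epsilon$. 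After these cancellations one is left with the target integral plus: (i) the two zeroth-order ``source'' terms $\varphi\,(\diver_xq-\eta'\diver_xf)$ coming from the inhomogeneity, and (ii) the error from swapping $q(y,\cdot,\cdot)\leftrightarrow q(x,\cdot,\cdot)$ and $f(y,\cdot)\leftrightarrow f(x,\cdot)$. The source terms are precisely where the assumption on the flux enters: I would show, using the integration-by-parts representation \eqref{eq:identity for integration by parts in formulation} of $q$, that $\diver_xq(x,k_1,k_2)-\eta'(k_1-k_2)\diver_xf(x,k_1)$, when combined across the two inequalities and passed to the limit, telescopes to something controlled, invoking property 2b \eqref{eq:uniform convergence of differential quotient} to get the crucial \emph{uniform} (in $k$ over the compact range of $u,\tilde u$) control of the difference quotients of $f$ so that the $x$-$y$ swap errors vanish as $\epsilon\to0$.

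More concretely, for the limit passage I would first integrate out the time mollifier using the Lebesgue Differentiation Theorem in the $t$-variable (after the change of variables to midpoint/difference coordinates), reducing the double time integral to a single one over $t=s$; the second condition in Definition \ref{def:(entropy-solution)} (the $L^1$ right-continuity at $0$) guarantees no boundary contribution appears at $t=0$ because $\psi$ has compact support in $I=(0,\infty)$. Then I would send the spatial mollifier parameter to $0$: the map $x\mapsto q(x,u(x,t),\tilde u(x,t))$ and the source term both lie in $L^1_{\mathrm{loc}}$ by Remark \ref{rem:notation in the entropy inequality} and Proposition \ref{prop:properties of the entropy flux}, so Lebesgue points handle the convergence of the main terms, while the swap-error terms are bounded by $\sup_{k\in K}|f(y,k)-f(x,k)-D_xf(x,k)(y-x)|/|y-x|$ integrated against the mollifier, which tends to $0$ by \eqref{eq:uniform convergence of differential quotient}. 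Finally the two inhomogeneous source contributions are shown to cancel in the limit: the term from the $u$-equation contributes $+\varphi[\diver_xq(x,u,\tilde u)-\eta'\diver_xf(x,u)]$ and from the $\tilde u$-equation $+\varphi[\diver_xq(y,\tilde u,u)-\eta'\diver_xf(y,\tilde u)]$, and using $\diver_xq(x,k_1,k_2)=\sign(k_1-k_2)(\diver_xf(x,k_1)-\diver_xf(x,k_2))$ together with the $x\to y$ replacement one checks these sum to $o(1)$.

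The main obstacle I anticipate is exactly the treatment of the inhomogeneous source terms under the weak hypotheses: in Kruzkov's original argument the Lipschitz-in-$k$ regularity of $\partial_{x_i}f$ makes the pointwise manipulation of $\diver_xf(x,u(x,t))$ versus $\diver_xf(x,\tilde u(y,s))$ straightforward, whereas here $\diver_xf$ is only defined a.e.\ in $x$ and merely continuous (not Lipschitz) in $k$. The remedy is to keep everything under the integral sign and use: (a) uniform differentiability a.e.\ (property 2a) so that the exceptional set $\Theta$ is independent of $k$ and the composed functions are well-defined a.e.; (b) the uniform convergence of difference quotients (property 2b) to control all the $x$-$y$ exchange errors simultaneously over the compact $k$-range; and (c) the representation of $q$ via \eqref{eq:identity for integration by parts in formulation}, which converts spatial derivatives of $q$ into spatial derivatives of $f$ against a bounded $C^0$ weight $\eta''$ — though since here $\eta$ is Kruzkov's $|k_1-k_2|$, one rather works with the already-established formula $\diver_xq=\sign(\cdot)(\diver_xf(x,k_1)-\diver_xf(x,k_2))$ valid a.e.\ in $x$ for all $k_1,k_2$, which is the content we extract from Lemma \ref{lem: approximation lemma R}. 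I would organize the proof so that all limits are justified by a single application each of the Dominated Convergence Theorem and the Lebesgue Differentiation Theorem, with the flux hypotheses invoked only at the one place where the $x$-$y$ swap occurs.
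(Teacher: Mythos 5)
Your strategy is essentially the paper's (doubling of variables with Kruzkov's entropy, a mollified test function, adding the two inequalities and letting the mollification parameter tend to $0$), but the bookkeeping of which terms cancel is wrong, and the error sits exactly at the heart of the inhomogeneous case. After adding the two entropy inequalities, the combined zeroth-order source term equals $\sign(u-\tilde u)\big(\diver_y f(y,u)-\diver_x f(x,\tilde u)\big)$ (the paper's computations \eqref{eq:calculation1}--\eqref{eq:calculation2}), and its mollified limit as $\epsilon\to0^+$ is $\diver_xq\left(x,u(x,t),\tilde u(x,t)\right)$, which is \emph{not} zero in general; your claim that the two source contributions ``sum to $o(1)$'' after the $x\to y$ replacement is false. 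Symmetrically, the term you call the swap error, $\iint \omega_\epsilon(t-s)\,\nabla_{y}\rho_{\epsilon}(x-y)\cdot\big(q(y,u,\tilde u)-q(x,u,\tilde u)\big)\,dy\,ds$, does \emph{not} vanish: $\nabla_y\rho_\epsilon$ is of size $\epsilon^{-d-1}$ while $q(y,\cdot,\cdot)-q(x,\cdot,\cdot)$ is only $O(\epsilon)$ by Lipschitz continuity, so after integrating over a ball of volume $O(\epsilon^d)$ the contribution is $O(1)$. Its actual limit is $-\diver_xq\left(x,u(x,t),\tilde u(x,t)\right)$ (the paper's $I_4^{\epsilon}$), obtained by writing $q_i(x,\cdot,\cdot)-q_i(y,\cdot,\cdot)=\nabla_xq_i(x,\cdot,\cdot)\cdot(x-y)+\Theta$ and using $\iint\omega_\epsilon\,\partial_{y_i}\rho_\epsilon(x-y)(x-y)\,dy\,ds=e_i$. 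Property 2b of Assumption \ref{ass:assumption on the flux} controls only the Taylor remainder $\Theta$; the first-order term survives and is essential. The lemma holds because this $-\diver_xq$ cancels the $+\diver_xq$ coming from the source terms --- two nonzero limits cancelling each other --- not because each piece separately tends to zero as you assert. (The same accounting applies with your midpoint test function; the choice of $\varphi$ does not change this.)

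Two smaller inaccuracies: the replacement $\diver_y f(y,u(x,t))\to\diver_x f(x,u(x,t))$ cannot be justified by ``local Lipschitz continuity of $f$ in $x$,'' since $\diver_xf(\cdot,k)$ is only an $L^\infty_{\text{loc}}$ function defined almost everywhere; the paper justifies this limit via the Lebesgue Differentiation Theorem applied to $(y,s)\mapsto \sign(u(x,t)-\tilde u(y,s))\big(\diver_y f(y,u(x,t))-\diver_x f(x,\tilde u(y,s))\big)$, which in turn needs item 2a (continuity of $k\mapsto \diver_xf(x,k)$ for a.e.\ $x$) to make this composition well defined and locally bounded. Also, condition \eqref{eq:continuity from the right at 0} of Definition \ref{def:(entropy-solution)} plays no role in this lemma, since $\psi$ is compactly supported in $I$. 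The first issue in the previous paragraph is the substantive one: as written, your proposal does not establish the cancellation that makes the inequality true.
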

\begin{proof}
The proof is divided into $2$ steps.\\
\textbf{Step 1}\\
Let us fix $y\in\mathbb{R}^d$, $k_{2}\in\mathbb{R}$ and $s\in I$. We choose a pair
of functions $\left(\eta,q\right)$ defined by
\begin{equation}
\eta=\eta\left(k_{1},k_{2}\right)=\left|k_{1}-k_{2}\right|,\quad q=q\left(x,k_{1},k_{2}\right)=\sign\left(k_{1}-k_{2}\right)\left(f\left(x,k_{1}\right)-f\left(x,k_{2}\right)\right).
\end{equation}

By Lemma \ref{lem: approximation lemma R} we know that this pair
satisfies the entropy inequality (\ref{eq:entropy inequality}). Let us
choose a test function of the form

\begin{equation}
\label{eq:definition of phi}
\varphi=\varphi\left(x,t,y,s\right):=\psi(x,t)\omega_\epsilon(t-s)\rho_{\epsilon}\left(x-y\right).
\end{equation}
Here $\omega_\epsilon$ stands for the $1$-dimensional standard mollifier kernel and $\rho_{\epsilon}$ stands for the $d$-dimensional standard mollifier
kernel as in (\ref{eq: standard mollifier kernel}), $0\leq \psi\in \operatorname{Lip}_{c}\left(\mathbb{R}^d\times I\right)$. Since $u$
is an entropy solution, we get by Definition \ref{def:(entropy-solution)}
\begin{equation}
\intop_{I}\intop_{\mathbb{R}^d}\bigg[\partial_{t}\varphi\,\eta\left(u,k_{2}\right)+\varphi\,\bigg(\diver_xq\left(x,u,k_{2}\right)-\partial_{1}\eta\left(u,k_{2}\right)\diver_x f\left(x,u\right)\bigg)+\nabla_{x}\varphi\cdot q\left(x,u,k_{2}\right)\bigg]dxdt\geq 0.
\end{equation}

Here $u=u\left(x,t\right)$, and $\partial_{1}\eta$ stands for the
partial derivative of $\eta$ with respect to the first variable. We choose
$k_{2}=\tilde{u}\left(y,s\right)$. By integrating the last inequality
on $\mathbb{R}^d\times I$ with respect to $dyds$ and using Fubini's Theorem we get

\begin{multline}
\label{eq:first intergal R}
\intop_{I}\intop_{\mathbb{R}^d}\Bigg\{\intop_{I}\intop_{\mathbb{R}^d}\bigg[\partial_{t}\varphi\,\eta\left(u,\tilde{u}\right)+\varphi\bigg(\diver_xq\left(x,u,\tilde{u}\right)-\partial_{1}\eta\left(u,\tilde{u}\right)\diver_x f\left(x,u\right)\bigg)+\nabla_{x}\varphi\cdot q\left(x,u,\tilde{u}\right)\bigg]dyds\Bigg\}dxdt \geq0.
\end{multline}
In a similar manner, for fixed $x\in\mathbb{R}^d$, $k_{1}\in\mathbb{R}$, and
$t\in I$, by the assumption that $\tilde{u}$ is an entropy solution,
we have
\begin{equation}
\intop_{I}\intop_{\mathbb{R}^d}\bigg[\partial_{s}\varphi\,\eta\left(k_{1},\tilde{u}\right)+\varphi\,\bigg(\diver_yq\left(y,k_{1},\tilde{u}\right)-\partial_{2}\eta\left(k_{1},\tilde{u}\right)\diver_y f\left(y,\tilde{u}\right)\bigg)+\nabla_{y}\varphi\cdot q\left(y,k_{1},\tilde{u}\right)\bigg]dyds\geq0.
\end{equation}
Here $\tilde{u}=\tilde{u}\left(y,s\right)$ and $\partial_{2}\eta$
is the derivative of $\eta$ with respect to the second variable. We choose
$k_{1}=u\left(x,t\right)$. By integrating the last inequality on
$\mathbb{R}^d\times I$ with respect to $dxdt$ we get
\begin{multline}
\label{eq:second integral R}
\intop_{I}\intop_{\mathbb{R}^d}\Bigg\{\intop_{I}\intop_{\mathbb{R}^d}\bigg[\partial_{s}\varphi\,\eta\left(u,\tilde{u}\right)+\varphi\,\bigg(\diver_yq\left(y,u,\tilde{u}\right)-\partial_{2}\eta\left(u,\tilde{u}\right)\diver_y f\left(y,\tilde{u}\right)\bigg)+\nabla_{y}\varphi\cdot q\left(y,u,\tilde{u}\right)\bigg]dyds \Bigg\} dxdt\geq0.
\end{multline}
We add up (\ref{eq:first intergal R}) and (\ref{eq:second integral R}), and 
we get
\begin{multline}
\label{eq:third integral R}
\intop_{I}\intop_{\mathbb{R}^d}\Bigg\{\intop_{I}\intop_{\mathbb{R}^d}\bigg[\partial_{t}\varphi\,\eta\left(u,\tilde{u}\right)+\varphi\,\bigg(\diver_xq\left(x,u,\tilde{u}\right)-\partial_{1}\eta\left(u,\tilde{u}\right)\diver_x f\left(x,u\right)\bigg)+\nabla_{x}\varphi\cdot q\left(x,u,\tilde{u}\right)\bigg]dyds
\\
+\intop_{I}\intop_{\mathbb{R}^d}\bigg[\partial_{s}\varphi\,\left(u,\tilde{u}\right)+\varphi\,\bigg(\diver_yq\left(y,u,\tilde{u}\right)-\partial_{2}\eta\left(u,\tilde{u}\right)\diver_y f\left(y,\tilde{u}\right)\bigg)+\nabla_{y}\varphi\cdot q\left(y,u,\tilde{u}\right)\bigg]dyds\Bigg\} dxdt\geq0.
\end{multline}

In \eqref{eq:third integral R} we have $\varphi=\varphi(x,t,y,s)$. Note that we have in \eqref{eq:third integral R} an integral of the form $\intop_{I}\intop_{\mathbb{R}^d}\left\{ \cdot\right\} \,dxdt$, and within $\left\{ \cdot\right\}$, we observe analogous terms characterized by the interchange of roles between $x,t$ and $y,s$. 
For instance, the term $\intop_{I}\intop_{\mathbb{R}^d}\partial_{t}\varphi\,\eta\left(u,\tilde{u}\right) \,dyds$ is analogous to the term $\intop_{I}\intop_{\mathbb{R}^d}\partial_{s}\varphi\,\left(u,\tilde{u}\right) \,dyds$.

We sum up analogous terms from $\left\{ \cdot\right\}$ in the inequality (\ref{eq:third integral R}). 

By the definition of $\varphi$, we get
\begin{multline}
\label{eq:fourth integral R}
\intop_{I}\intop_{\mathbb{R}^d}\partial_{t}\varphi\,\eta\left(u,\tilde{u}\right)dyds+\intop_{I}\intop_{\mathbb{R}^d}\partial_{s}\varphi\,\eta\left(u,\tilde{u}\right)dyds
\\
=\intop_{I}\intop_{\mathbb{R}^d}\biggl(\partial_t\psi(x,t)\omega_\epsilon(t-s)+\psi(x,t)\partial_{t}\omega_\epsilon(t-s)+\psi(x,t)\partial_{s}\omega_\epsilon(t-s)\biggr)\rho_{\epsilon}\left(x-y\right)\eta\left(u,\tilde{u}\right)dyds
\\
=\partial_t\psi(x,t)\intop_{I}\intop_{\mathbb{R}^d}\omega_\epsilon(t-s)\rho_{\epsilon}\left(x-y\right)\eta\left(u,\tilde{u}\right)dyds,
\end{multline}
where in the second equality of \eqref{eq:fourth integral R} we have $\psi(x,t)\partial_{t}\omega_\epsilon(t-s)+\psi(x,t)\partial_{s}\omega_\epsilon(t-s)=0$.

Next, we sum up analogous terms that involve $\varphi$ (without derivative). Before doing that, note that

\begin{multline}
\label{eq:calculation1}
\diver_xq\left(x,u,\tilde{u}\right)-\partial_{1}\eta\left(u,\tilde{u}\right)\diver_x f\left(x,u\right)
\\
=\sign\left(u-\tilde{u}\right)\bigg(\diver_x f\left(x,u\right)-\diver_x f\left(x,\tilde{u}\right)\bigg)-\sign\left(u-\tilde{u}\right)\diver_x f\left(x,u\right)
\\
=-\sign\left(u-\tilde{u}\right)\diver_x f\left(x,\tilde{u}\right);
\end{multline}
\begin{multline}
\label{eq:calculation2}
\diver_yq\left(y,u,\tilde{u}\right)-\partial_{2}\eta\left(u,\tilde{u}\right)\diver_y f\left(y,\tilde{u}\right)
\\
=\sign\left(u-\tilde{u}\right)\bigg(\diver_y f\left(y,u\right)-\diver_y f\left(y,\tilde{u}\right)\bigg)+\sign\left(u-\tilde{u}\right)\diver_y f\left(y,\tilde{u}\right)
\\
=\sign\left(u-\tilde{u}\right)\diver_y f\left(y,u\right).
\end{multline}
Therefore, from \eqref{eq:calculation1} and \eqref{eq:calculation2}, we get
\begin{multline}
\bigg[\diver_xq\left(x,u,\tilde{u}\right)-\partial_{1}\eta\left(u,\tilde{u}\right)\diver_x f\left(x,u\right)\bigg]+\bigg[\diver_yq\left(y,u,\tilde{u}\right)-\partial_{2}\eta\left(u,\tilde{u}\right)\diver_y f\left(y,\tilde{u}\right)\bigg]
\\
=\sign\left(u-\tilde{u}\right)\big(\diver_y f\left(y,u\right)-\diver_x f\left(x,\tilde{u}\right)\big).
\end{multline}
Therefore,
\begin{multline}
\label{eq:fifth integral R}
\intop_{I}\intop_{\mathbb{R}^d}\varphi\,\bigg(\diver_xq\left(x,u,\tilde{u}\right)-\partial_{1}\eta\left(u,\tilde{u}\right)\diver_x f\left(x,u\right)\bigg)dyds
\\
+\intop_{I}\intop_{\mathbb{R}^d}\varphi\,\bigg(\diver_yq\left(y,u,\tilde{u}\right)-\partial_{2}\eta\left(u,\tilde{u}\right)\diver_y f\left(y,\tilde{u}\right)\bigg)dyds
\\
=\psi(x,t)\intop_{I}\intop_{\mathbb{R}^d}\omega_\epsilon(t-s)\rho_{\epsilon}\left(x-y\right)\sign\left(u-\tilde{u}\right)\bigg(\diver_y f\left(y,u\right)-\diver_x f\left(x,\tilde{u}\right)\bigg)dyds.
\end{multline}
Next, we add up analogous terms which contain $\nabla_x\varphi,\nabla_y\varphi$.
We get
\begin{multline}
\label{eq:sixth integral R}
\intop_{I}\intop_{\mathbb{R}^d}\nabla_{x}\varphi\cdot q\left(x,u,\tilde{u}\right)dyds+\intop_{I}\intop_{\mathbb{R}^d}\nabla_{y}\varphi\cdot q\left(y,u,\tilde{u}\right)dyds
\\
=\intop_{I}\intop_{\mathbb{R}^d}\bigg(\nabla_x\psi(x,t)\omega_\epsilon(t-s)\rho_{\epsilon}\left(x-y\right)+\psi(x,t)\omega_\epsilon(t-s)\nabla_{x}\rho_{\epsilon}\left(x-y\right)\bigg)\cdot q\left(x,u,\tilde{u}\right)dyds
\\
+\intop_{I}\intop_{\mathbb{R}^d}\psi(x,t)\omega_\epsilon(t-s)\nabla_{y}\rho_{\epsilon}\left(x-y\right)\cdot q\left(y,u,\tilde{u}\right)dyds
\\
=\psi(x,t)\intop_{I}\intop_{\mathbb{R}^d}\omega_\epsilon(t-s)\bigg(\nabla_{x}\rho_{\epsilon}\left(x-y\right)\cdot q\left(x,u,\tilde{u}\right)+\nabla_{y}\rho_{\epsilon}\left(x-y\right)\cdot q\left(y,u,\tilde{u}\right)\bigg)dyds
\\
+\nabla_x\psi(x,t)\cdot\intop_{I}\intop_{\mathbb{R}^d}\omega_\epsilon(t-s)\rho_{\epsilon}\left(x-y\right)q\left(x,u,\tilde{u}\right)dyds
\\
=\psi(x,t)\intop_{I}\intop_{\mathbb{R}^d}\omega_\epsilon(t-s)\nabla_{y}\rho_{\epsilon}(x-y)\cdot\bigg(q(y,u,\tilde{u})-q(x,u,\tilde{u})\bigg)dyds
\\
+\nabla_x\psi(x,t)\cdot\intop_{I}\intop_{\mathbb{R}^d}\omega_\epsilon(t-s)\rho_{\epsilon}\left(x-y\right)q\left(x,u,\tilde{u}\right)dyds,
\end{multline}
where in the last equation of \eqref{eq:sixth integral R} we use $\nabla_{x}\rho_{\epsilon}\left(x-y\right)=-\nabla_{y}\rho_{\epsilon}\left(x-y\right)$.

We substitute (\ref{eq:fourth integral R}), (\ref{eq:fifth integral R}), and (\ref{eq:sixth integral R}) into (\ref{eq:third integral R}) to obtain for $\epsilon \in (0,\infty)$
\begin{equation}
\label{eq:eighth integral R}
\intop_{I}\intop_{\mathbb{R}^d}\bigg\{\partial_t\psi(x,t)I_{1}^{\epsilon}(x,t)+\nabla_x\psi(x,t)\cdot I_{2}^{\epsilon}(x,t)+\psi(x,t)\bigg(I_{3}^{\epsilon}(x,t)+I_{4}^{\epsilon}(x,t)\bigg)\bigg\}dxdt
\geq 0.
\end{equation}
Here
\begin{equation}
I_{1}^{\epsilon}\left(x,t\right):=\intop_{I}\intop_{\mathbb{R}^d}\omega_\epsilon(t-s)\rho_{\epsilon}(x-y)\eta\left(u(x,t),\tilde{u}(y,s)\right)dyds,
\end{equation}
\begin{equation}
I_{2}^{\epsilon}\left(x,t\right):=\intop_{I}\intop_{\mathbb{R}^d}\omega_\epsilon(t-s)\rho_{\epsilon}\left(x-y\right)q\left(x,u(x,t),\tilde{u}(y,s)\right)dyds,
\end{equation}

\begin{equation}
I_{3}^{\epsilon}\left(x,t\right):=\intop_{I}\intop_{\mathbb{R}^d}\omega_\epsilon(t-s)\rho_{\epsilon}\left(x-y\right)\sign\left(u(x,t)-\tilde{u}(y,s)\right)
\bigg(\diver_y f\left(y,u(x,t)\right)-\diver_x f\left(x,\tilde{u}(y,s)\right)\bigg)dyds,
\end{equation}

\begin{equation}
I_{4}^{\epsilon}\left(x,t\right):=\intop_{I}\intop_{\mathbb{R}^d}\omega_\epsilon(t-s)\nabla_{y}\rho_{\epsilon}(x-y)\cdot\biggl(q(y,u(x,t),\tilde{u}(y,s))-q(x,u(x,t),\tilde{u}(y,s))\biggr)dyds.
\end{equation}

\textbf{Step 2}\\
We intend to take the limit as $\epsilon\to 0^+$ in \eqref{eq:eighth integral R} using the Dominated Convergence Theorem to interchange the limit and the integral. To apply the Dominated Convergence Theorem, we need to prove the following four assertions, which tell us that the families of functions $I^\epsilon_j$, $j\in\{1,2,3,4\}$, indexed by $\epsilon$ with variable $(x,t)\in \R^d\times I$, converge almost everywhere and are bounded by locally integrable functions:
\begin{enumerate}
\item For almost every $(x,t)\in\R^d\times I$ we get
\begin{equation}
\label{eq:convergence of I1}
\begin{cases}
\lim_{\epsilon\to 0^+}I_{1}^{\epsilon}\left(x,t\right)=\eta(u(x,t),\tilde{u}(x,t))
\\ 
\sup_{\epsilon\in(0,\infty)}|I_{1}^{\epsilon}\left(x,t\right)|\leq \|\tilde{u}\|_{L^\infty(\R^d\times I)}+|u(x,t)|
\end{cases},
\end{equation}
and the function on the right hand side of the inequality in \eqref{eq:convergence of I1} lies in the space $L^1_{\text{loc}}(\R^d\times I)$.

\item For almost every $(x,t)\in\R^d\times I$ we get
\begin{equation}
\label{eq:convergence of I2}
\begin{cases}
\lim_{\epsilon\to 0^+}I_{2}^{\epsilon}\left(x,t\right)=q(x,u(x,t),\tilde{u}(x,t))
\\ 
\sup_{\epsilon\in(0,\infty)}|I_{2}^{\epsilon}\left(x,t\right)|\leq \|f(x,\cdot)\|_{L^\infty(\image \tilde{u})}+|f(x,u(x,t))|
\end{cases},
\end{equation}
and the function on the right hand side of the inequality in \eqref{eq:convergence of I2} lies in the space $L^1_{\text{loc}}(\R^d\times I)$.

\item Let $P_x(\Supp(\psi))$ be the projection on $\R^d$ of the compact support of $\psi$, and let $U_0$ be any open and bounded set which contains it. Let us denote $D:=\dist(P_x(\Supp(\psi)),\partial U_0)>0$. For almost every $(x,t)\in P_x(\Supp(\psi))\times I$ we have
\begin{equation}
\label{eq:convergence of I3}
\begin{cases}
\lim_{\epsilon\to 0^+}I_{3}^{\epsilon}\left(x,t\right)=\diver_xq\left(x,u\left(x,t\right),\tilde{u}\left(x,t\right)\right)
\\
\sup_{\epsilon\in(0,D)}|I_{3}^{\epsilon}\left(x,t\right)|\leq \|\diver_yf(\cdot,u(x,t))\|_{L^\infty(U_0)}+\|\diver_xf(x,\cdot)\|_{L^\infty(\image \tilde{u})}
\end{cases},
\end{equation}
and the function on the right hand side of the inequality in \eqref{eq:convergence of I3} lies in the space $L^1_{\text{loc}}(\R^d\times I)$.

\item Moreover, let $U_0$ and $D$ as above, and let $A\subset\R$ be any bounded set such that $\mathcal{L}^1\big(A\Delta\left[\image u\cup\image \tilde{u}\right]\big)=0$. Let us denote by $L$ the Lipschitz constant of $f$ on the bounded set $U_0\times A$. Then, for almost every $(x,t)\in P_x(\Supp(\psi))\times I$ we get
\begin{equation}
\label{eq:convergence of I4}
\begin{cases}
\lim_{\epsilon\to 0^+}I_{4}^{\epsilon}\left(x,t\right)=-\diver_xq\left(x,u\left(x,t\right),\tilde{u}\left(x,t\right)\right)
\\
\sup_{\epsilon\in(0,D)}|I_{4}^{\epsilon}\left(x,t\right)|\leq 2L\mathcal{L}^d(B_1(0))\|\nabla\rho\|_{L^\infty(\R^d)}\sqrt{d}
\end{cases}.
\end{equation}
\end{enumerate}

From \eqref{eq:eighth integral R}, \eqref{eq:convergence of I1}, \eqref{eq:convergence of I2}, \eqref{eq:convergence of I3}, and \eqref{eq:convergence of I4}, we obtain by the Dominated Convergence Theorem the localized contraction property \eqref{eq:final term, in statement of theorem}.

The proofs of \eqref{eq:convergence of I1}, \eqref{eq:convergence of I2} and \eqref{eq:convergence of I3} rely mainly upon Lebesgue Differentiation Theorem (Theorem \ref{thm:Lebesgue differentiation theorem}). The proof of \eqref{eq:convergence of I4} is a bit more complicated than the others, and it is established using Lebesgue Differentiation Theorem along with an approximating argument. 

We prove \eqref{eq:convergence of I1}. Note that for $t\in (0,\infty)$, we get for every sufficiently small $\epsilon\in (0,\infty)$ that $\intop_{I}\omega_\epsilon(t-s)ds=1$. Hence, by the definition of $\eta$ and the triangle inequality we get
\begin{multline}
\Bigl|I_{1}^{\epsilon}\left(x,t\right)-\eta\left(u\left(x,t\right),\tilde{u}\left(x,t\right)\right)\Bigr|
\\
=\left|\intop_{I}\intop_{\mathbb{R}^d}\omega_\epsilon(t-s)\rho_{\epsilon}(x-y)\eta\left(u(x,t),\tilde{u}(y,s)\right)dyds-\intop_{I}\intop_{\mathbb{R}^d}\omega_\epsilon(t-s)\rho_{\epsilon}(x-y)\eta\left(u(x,t),\tilde{u}(x,t)\right)dyds\right|
\\
\leq\intop_{I}\intop_{\mathbb{R}^d}\omega_\epsilon(t-s)\rho_{\epsilon}\left(x-y\right)\big|\eta\left(u\left(x,t\right),\tilde{u}\left(y,s\right)\right)-\eta\left(u\left(x,t\right),\tilde{u}\left(x,t\right)\right)\big|dyds
\\
\leq  \|\omega\|_{L^\infty(\R)}\|\rho\|_{L^\infty(\R^d)}\frac{1}{\epsilon}\intop_{t-\epsilon}^{t+\epsilon}\frac{1}{\epsilon^d}\intop_{B_{\epsilon}\left(x\right)}\left|\tilde{u}\left(x,t\right)-\tilde{u}\left(y,s\right)\right|dyds.
\end{multline}

Since $\tilde{u}\in L^{\infty}\left(\mathbb{R}^d\times I\right)$, then $\tilde{u}\in L_{\text{loc}}^{1}\left(\mathbb{R}^d\times I\right)$ and, according to Lebesgue Differentiation Theorem,
we get for almost every $(x,t)\in \mathbb{R}^d\times I$ the limit in \eqref{eq:convergence of I1}. The inequality in (\ref{eq:convergence of I1}) follows from the definition of $I^\epsilon_1$ and properties of $\omega_\epsilon,\rho_\epsilon$: for every $\epsilon\in(0,\infty)$ and for almost every $(x,t)\in\R^d\times I$ we have
\begin{equation}
|I_{1}^{\epsilon}\left(x,t\right)|\leq \ess-sup\limits_{(y,s)\in \R^d\times I}\eta(u(x,t),\tilde{u}(y,s))\leq \|\tilde{u}\|_{L^\infty(\R^d\times I)}+|u(x,t)|.
\end{equation}
Since $u,\tilde{u}\in L^\infty(\R^d\times I)$, the function $(x,t)\mapsto \|\tilde{u}\|_{L^\infty(\R^d\times I)}+|u(x,t)|$ lies in the space $L^1_{\text{loc}}(\R^d\times I)$.
It proves (\ref{eq:convergence of I1}). 

We prove \eqref{eq:convergence of I2}. We get
for almost every $(x,t)\in\R^d\times I$  and every small enough $\epsilon\in (0,\infty)$
\begin{multline}
\big|I_{2}^{\epsilon}\left(x,t\right)-q\left(x,u\left(x,t\right),\tilde{u}\left(x,t\right)\right)\big|
\\
=\left|\intop_{I}\intop_{\mathbb{R}^d}\omega_\epsilon(t-s)\rho_{\epsilon}\left(x-y\right)q\left(x,u(x,t),\tilde{u}(y,s)\right)dyds
-\intop_{I}\intop_{\mathbb{R}^d}\omega_\epsilon(t-s)\rho_{\epsilon}\left(x-y\right)q\left(x,u(x,t),\tilde{u}(x,t)\right)dyds\right|
\\
\leq\intop_{I}\intop_{\mathbb{R}^d}\omega_\epsilon(t-s)\rho_{\epsilon}(x-y)\big|q\left(x,u\left(x,t\right),\tilde{u}\left(y,s\right)\right)-q\left(x,u\left(x,t\right),\tilde{u}\left(x,t\right)\right)\big|dyds
\\
\leq \|\omega\|_{L^\infty(\R)}\|\rho\|_{L^\infty(\R^d)}\frac{1}{\epsilon}\intop_{t-\epsilon}^{t+\epsilon}\frac{1}{\epsilon^d}\intop_{B_{\epsilon}\left(x\right)}\left|q\left(x,u\left(x,t\right),\tilde{u}\left(y,s\right)\right)-q\left(x,u\left(x,t\right),\tilde{u}\left(x,t\right)\right)\right|dyds.
\end{multline}
Since $\tilde{u}\in L^{\infty}\left(\mathbb{R}^d\times I\right)$
and $f(x,\cdot)\in L^\infty_{\text{loc}}(\R)$ for every $x\in \R^d$, we know that the function
\begin{equation}
\left(y,s\right)\longmapsto q\left(x,u\left(x,t\right),\tilde{u}\left(y,s\right)\right)=\sign\left(u\left(x,t\right)-\tilde{u}\left(y,s\right)\right)\big(f\left(x,u\left(x,t\right)\right)-f\left(x,\tilde{u}\left(y,s\right)\right)\big)
\end{equation}
lies in $L_{\text{loc}}^{1}\left(\mathbb{R}^d\times I\right)$. Therefore, by Lebesgue Differentiation Theorem, almost every
$\left(x,t\right)\in\mathbb{R}^d\times I$ is a Lebesgue point of this
function, so we get the limit in \eqref{eq:convergence of I2}. The inequality in (\ref{eq:convergence of I2}) follows from the definition of $I^\epsilon_2$: for every $\epsilon\in(0,\infty)$ and almost every $(x,t)\in \R^d\times I$ we get
\begin{multline}
|I_{2}^{\epsilon}\left(x,t\right)|\leq \ess-sup_{(y,s)\in \R^d\times I}|q(x,u(x,t),\tilde{u}(y,s))|\leq \ess-sup_{(y,s)\in \R^d\times I}|f(x,u(x,t))-f(x,\tilde{u}(y,s))|
\\
\leq |f(x,u(x,t))|+\|f(x,\cdot)\|_{L^\infty(\image \tilde{u})}.
\end{multline}
Since $u,\tilde{u}\in L^\infty(\R^d\times I)$ and $f\in L^\infty_{\text{loc}}(\R^d\times\R,\R^d)$ (because $f$ is continuous), the function $(x,t)\mapsto |f(x,u(x,t))|+\|f(x,\cdot)\|_{L^\infty(\image \tilde{u})}$ lies in the space $L^1_{\text{loc}}(\R^d\times I)$.
It proves (\ref{eq:convergence of I2}). 

We prove now \eqref{eq:convergence of I3}. For almost every $(x,t)\in\R^d\times I$ and sufficiently small $\epsilon\in (0,\infty)$ we have    
\begin{multline}
\label{eq:some estimate for I3}
\left|I_{3}^{\epsilon}\left(x,t\right)-\diver_xq(x,u(x,t),\tilde{u}(x,t))\right|
\\
=\Bigg|\intop_{I}\intop_{\mathbb{R}^d}\omega_\epsilon(t-s)\rho_{\epsilon}\left(x-y\right)\sign\left(u(x,t)-\tilde{u}(y,s)\right)
\bigg(\diver_y f\left(y,u(x,t)\right)-\diver_x f\left(x,\tilde{u}(y,s)\right)\bigg)dyds
\\
-\intop_{I}\intop_{\mathbb{R}^d}\omega_\epsilon(t-s)\rho_{\epsilon}\left(x-y\right)\diver_x q(x,u(x,t),\tilde{u}(x,t))dyds\Bigg|
\\
\leq\intop_{I}\intop_{\mathbb{R}^d}\omega_\epsilon(t-s)\rho_{\epsilon}\left(x-y\right)\bigg|\sign\left(u(x,t)-\tilde{u}(y,s)\right)
\bigg(\diver_y f\left(y,u(x,t)\right)-\diver_x f\left(x,\tilde{u}(y,s)\right)\bigg)
\\
-\diver_x q(x,u(x,t),\tilde{u}(x,t))\bigg|dyds
\\
\leq \|\omega\|_{L^\infty(\R)}\|\rho\|_{L^\infty(\R^d)}\frac{1}{\epsilon}\intop_{t-\epsilon}^{t+\epsilon}\frac{1}{\epsilon^d}\intop_{B_{\epsilon}\left(x\right)}\bigg|\sign\left(u(x,t)-\tilde{u}(y,s)\right)
\bigg(\diver_y f\left(y,u(x,t)\right)-\diver_x f\left(x,\tilde{u}(y,s)\right)\bigg)
\\
-\diver_x q(x,u(x,t),\tilde{u}(x,t))\bigg|dyds.
\end{multline}

Since $\tilde{u}\in L^\infty(\R^d\times I)$ and $f$ is locally Lipschitz, we get for almost every $(x,t)\in \R^d\times I$ that the function
\begin{equation}
\label{eq:a function with divergence}
(y,s)\longmapsto \sign(u(x,t)-\tilde{u}(y,s))\bigg(\diver_y f(y,u(x,t))-\diver_x f(x,\tilde{u}(y,s))\bigg)
\end{equation}
lies in the space $L^1_{\text{loc}}(\R^d\times I)$. Indeed, note that for $k:=u(x,t)$, the function $y\mapsto f(y,k)$ is locally Lipschitz in $\R^d$. Therefore, by Rademacher's Theorem, we get that  
$\diver_yf(\cdot,k)\in L^\infty_{\text{loc}}(\R^d)$, so the function $(y,s)\mapsto \diver_yf(y,u(x,t))$ lies in $L^1_{\text{loc}}(\R^d\times I)$. By item 2a in Assumption \ref{ass:assumption on the flux}, for almost every $x\in\R^d$, we get that the function $k\mapsto \diver_xf(x,k)$ is continuous on $\R$, so it is locally bounded on $\R$. Since $\tilde{u}\in L^\infty(\R^d\times I)$, we have that the function $(y,s)\mapsto \diver_x f(x,\tilde{u}(y,s))$ is bounded in $\R^d\times I$. Therefore, we get that the function in \eqref{eq:a function with divergence} lies in $L^1_{\text{loc}}(\R^d\times I)$. Thus, almost every $(x,t)\in\R^d\times I$ is a Lebesgue point of this function according to Lebesgue Differentiation Theorem. Therefore, for almost every $(x,t)\in\R^d\times I$, we obtain by \eqref{eq:some estimate for I3}, the limit in \eqref{eq:convergence of I3}. The inequality in \eqref{eq:convergence of I3} follows from the definition of $I_3^\epsilon$:
for almost every $(x,t)\in P_x(\Supp(\psi))\times I$ and every $\epsilon\in (0,D)$ we have $B_\epsilon(x)\subset U_0$ and
\begin{multline}
\label{eq:estimate for I3(x,t)}
|I_{3}^{\epsilon}\left(x,t\right)|\leq \intop_{I}\intop_{\mathbb{R}^d}\omega_\epsilon(t-s)\rho_{\epsilon}\left(x-y\right)\bigg|\diver_y f\left(y,u(x,t)\right)-\diver_x f\left(x,\tilde{u}(y,s)\right)\bigg|dyds
\\
\leq \ess-sup_{(y,s)\in U_0\times I}\big|\diver_y f\left(y,u(x,t)\right)-\diver_x f\left(x,\tilde{u}(y,s)\right)\big|
\\
\leq \|\diver_yf(\cdot,u(x,t))\|_{L^\infty(U_0)}+\|\diver_xf(x,\cdot)\|_{L^\infty(\image \tilde{u})}.
\end{multline}
The function on the right hand side of \eqref{eq:estimate for I3(x,t)} is locally bounded in $\R^d\times I$ since $u,\tilde{u}\in  L^\infty(\R^d\times I)$ and $\diver_x f\in L^\infty_{\text{loc}}(\R^d\times \R)$.
It completes the proof of  \eqref{eq:convergence of I3}. 

Now we prove \eqref{eq:convergence of I4}. First note that by the definition of the inner product we get
\begin{equation}
\label{eq: sum of partial derivatives of mollifier kernel times q   R}
I_{4}^{\epsilon}\left(x,t\right)
=-\sum_{i=1}^d\intop_{I}\intop_{\mathbb{R}^d}\omega_\epsilon(t-s)\partial_{y_i}\rho_{\epsilon}\left(x-y\right)\, \bigg(q_i\left(x,u\left(x,t\right),\tilde{u}\left(y,s\right)\right)-q_i\left(y,u\left(x,t\right),\tilde{u}\left(y,s\right)\right)\bigg)dyds,
\end{equation}
where we denote $q=(q_1,...,q_d)$. Therefore, for proving the limit in \eqref{eq:convergence of I4}, according to \eqref{eq: sum of partial derivatives of mollifier kernel times q   R}, it is enough to show that for every natural number $1\leq i\leq d$ and almost every $(x,t)\in \R^d\times I$ 
\begin{multline}
\label{eq: approximation of the i-th coordinate of I42 3}
\lim_{\epsilon\to 0^+}\intop_{I}\intop_{\mathbb{R}^d}\omega_\epsilon(t-s)\partial_{y_i}\rho_{\epsilon}\left(x-y\right)\, \bigg(q_i\left(x,u\left(x,t\right),\tilde{u}\left(y,s\right)\right)-q_i\left(y,u\left(x,t\right),\tilde{u}\left(y,s\right)\right)\bigg)dyds
\\
=(q_i)_{x_i}\left(x,u\left(x,t\right),\tilde{u}\left(x,t\right)\right)=e_i\cdot \nabla_xq_i\left(x,u\left(x,t\right),\tilde{u}\left(x,t\right)\right),
\end{multline} 
where $e_i$ is the standard unit vector with $1$ in the $i$-th coordinate and zero in the other coordinates. Here $(q_i)_{x_i}\left(x,u\left(x,t\right),\tilde{u}\left(x,t\right)\right)=\partial_{x_i}q_i\left(x,k_1,k_2\right)|_{k_1=u(x,t),k_2=\tilde{u}(x,t)}$.

Let us fix $1\leq i\leq d,i\in \N$. We now prove that, for almost every $(x,t)\in \R^d\times I$ the following formula holds:
\begin{equation}
\label{eq:helpformula2}
q_i\left(x,u\left(x,t\right),\tilde{u}\left(y,s\right)\right)-q_i\left(y,u\left(x,t\right),\tilde{u}\left(y,s\right)\right)
=\nabla_{x}q_i(x,u(x,t),\tilde{u}(y,s))\cdot\left(x-y\right)+\Theta(x,t,y,s)
\end{equation}
for almost every $(y,s)\in \R^d\times I$. Here $\Theta(x,t,y,s)$ is a function with the following property: for every $\xi\in (0,\infty)$ there exists $\epsilon\in(0,\infty)$ such that for almost every $(y,s)\in B_\epsilon(x)\times I$ we have $|\Theta(x,t,y,s)|\leq\xi|x-y|$.

Let us denote $f=(f_1,...,f_d)$. Let $(x_0,t_0)\in \R^d\times I$ be any point in the domain of $u$ such that the derivative of the $i$-th function coordinate of the flux, $f_i$, at the point $x_0$, $\nabla_xf_i(x_0,k)$, is a continuous function in $k\in\R$; by item 2a of Assumption \ref{ass:assumption on the flux} almost every $x\in\R^d$ has this property. Let $(y_0,s_0)\in \R^d\times I$ be a point in the domain of $\tilde{u}$. Let us denote $k_{1}:=u\left(x_0,t_0\right)$ and $k_{2}:=\tilde{u}\left(y_0,s_0\right)$. Let us define
\begin{equation}
g(y):=q_i\left(y,k_{1},k_{2}\right)=\sign\left(k_{1}-k_{2}\right)\left(f_i\left(y,k_{1}\right)-f_i\left(y,k_{2}\right)\right).
\end{equation}
By the choice of $x_0$, the function $g$ is differentiable at $x_0$ and  
\begin{equation}
\label{eq:helpformula1}
g(x_0)-g(y)=\nabla g(x_0)\cdot (x_0-y)+o(x_0-y),\quad \forall y\in\R^d.
\end{equation}
Therefore, we get \eqref{eq:helpformula1} for $y=y_0$. It proves \eqref{eq:helpformula2}. Note that 

\begin{multline}
\frac{|\Theta(x,t,y,s)|}{|x-y|}
=\frac{\left|q_i\left(x,u\left(x,t\right),\tilde{u}\left(y,s\right)\right)-q_i\left(y,u\left(x,t\right),\tilde{u}\left(y,s\right)\right)
-\nabla_{x}q_i(x,u(x,t),\tilde{u}(y,s))\cdot\left(x-y\right)\right|}{|x-y|}
\\
\leq \frac{\left|f_i(x,u(x,t))-f_i(y,u(x,t))
-\nabla_{x}f_i(x,u(x,t))\cdot(x-y)\right|}{|x-y|}
\\
+\frac{\left|f_i(x,\tilde{u}(y,s))-f_i(y,\tilde{u}(y,s))
-\nabla_{x}f_i(x,\tilde{u}(y,s))\cdot(x-y)\right|}{|x-y|}
\\
\leq 2\sup_{k\in K}\frac{\left|f_i(x,k)-f_i(y,k)
-\nabla_{x}f_i(x,k)\cdot(x-y)\right|}{|x-y|},
\end{multline}
where $K\subset\R$ is any compact set for which $\mathcal{L}^1\left(K\Delta\left[\image u\cup \image\tilde{u}\right]\right)=0$. By item 2b of Assumption \ref{ass:assumption on the flux} we have for arbitrary positive number $\xi$ a number $\epsilon\in (0,\infty)$ such that $|\Theta(x,t,y,s)|\leq\xi|x-y|$ for almost every $(y,s)\in B_\epsilon(x)\times I$.

We get by \eqref{eq:helpformula2}
\begin{multline}
\label{eq: approximation of the i-th coordinate of I42}
\intop_{I}\intop_{\mathbb{R}^d}\omega_\epsilon(t-s)\partial_{y_i}\rho_{\epsilon}\left(x-y\right)\, \bigg(q_i\left(x,u\left(x,t\right),\tilde{u}\left(y,s\right)\right)-q_i\left(y,u\left(x,t\right),\tilde{u}\left(y,s\right)\right)\bigg)dyds
\\
=\intop_{I}\intop_{\mathbb{R}^d}\omega_\epsilon(t-s)\partial_{y_i}\rho_{\epsilon}\left(x-y\right)\left(x-y\right)\cdot \nabla_xq_i\left(x,u\left(x,t\right),\tilde{u}\left(y,s\right)\right)dyds+o_\epsilon\left(1\right),
\end{multline}
where
\begin{equation}
\label{eq:litle o of an expression}
o_\epsilon\left(1\right):=\intop_{I}\intop_{\mathbb{R}^d}\omega_\epsilon(t-s)\partial_{y_i}\rho_{\epsilon}\left(x-y\right)\Theta(x,t,y,s)dyds\quad \text{and}\quad \lim_{\epsilon\to 0^+}o_\epsilon\left(1\right)=0.
\end{equation}
In order to prove the limit in \eqref{eq:litle o of an expression}, note that for $y\in B_{\epsilon}\left(x\right)$, we get
\begin{multline}
\label{eq:bound for derivative of rho epsilon}
\left|\partial_{y_i}\rho_{\epsilon}(x-y)\right||x-y|=\left|\frac{1}{\epsilon^d}\nabla \rho\left(\frac{x-y}{\epsilon}\right)\cdot\left(-\frac{1}{\epsilon}e_i\right)\right||x-y|
\leq\left|\frac{1}{\epsilon^d}\nabla \rho\left(\frac{x-y}{\epsilon}\right)\right|
\leq \frac{1}{\epsilon^d}\|\nabla \rho \|_{L^\infty(\R^d)},
\end{multline}
and for arbitrarily small $\xi\in (0,\infty)$, there exists $\epsilon\in (0,\infty)$ such that for almost every $(y,s)\in B_\epsilon(x)\times I$ we have $|\Theta(x,t,y,s)|\leq \xi |x-y|$. Therefore, $|o_\epsilon\left(1\right)|\leq \|\nabla \rho \|_{L^\infty(\R^d)}\mathcal{L}^d(B_1(0))\xi$.

In addition, note that by Fubini's Theorem and integration by parts we have for every sufficiently small $\epsilon\in (0,\infty)$

\begin{equation}
\label{eq:identity for the unit vector}
\intop_{I}\intop_{\mathbb{R}^d}\omega_\epsilon(t-s)\partial_{y_i}\rho_{\epsilon}\left(x-y\right)\left(x-y\right)dyds=e_i.
\end{equation}

More precisely, note that on the left-hand side of \eqref{eq:identity for the unit vector}, we have a vector due to the expression $x-y$. Examine the $j$-th coordinate of this vector, meaning that $\intop_{I}\intop_{\mathbb{R}^d}\omega_\epsilon(t-s)\partial_{y_i}\rho_{\epsilon}\left(x-y\right)\left(x_j-y_j\right)dyds$. Utilize Fubini's Theorem and integration by parts to transfer the derivative $\partial_{y_i}$ to $x_j-y_j$. In the case where $j\neq i$, we obtain zero; in the case where $j=i$, we obtain $1$.

Hence, using \eqref{eq:bound for derivative of rho epsilon} and \eqref{eq:identity for the unit vector}, we obtain
\begin{multline}
\label{eq: approximation of the i-th coordinate of I42 2}
\bigg|\intop_{I}\intop_{\mathbb{R}^d}\omega_\epsilon(t-s)\partial_{y_i}\rho_{\epsilon}\left(x-y\right)\left(x-y\right)\cdot \nabla_xq_i\left(x,u\left(x,t\right),\tilde{u}\left(y,s\right)\right)dyds-e_i\cdot \nabla_xq_i\left(x,u\left(x,t\right),\tilde{u}\left(x,t\right)\right)\bigg|
\\
=\bigg|\intop_{I}\intop_{\mathbb{R}^d}\omega_\epsilon(t-s)\partial_{y_i}\rho_{\epsilon}\left(x-y\right)\left(x-y\right)\cdot \nabla_xq_i\left(x,u\left(x,t\right),\tilde{u}\left(y,s\right)\right)dyds
\\
-\left(\intop_{I}\intop_{\mathbb{R}^d}\omega_\epsilon(t-s)\partial_{y_i}\rho_{\epsilon}\left(x-y\right)\left(x-y\right)dyds\right)\cdot \nabla_xq_i\left(x,u\left(x,t\right),\tilde{u}\left(x,t\right)\right)\bigg|
\\
\leq\intop_{I}\intop_{\mathbb{R}^d}\omega_\epsilon(t-s)\left|\partial_{y_i}\rho_{\epsilon}(x-y)\right||x-y|\bigg|\nabla_xq_i\left(x,u\left(x,t\right),\tilde{u}\left(y,s\right)\right)-\nabla_xq_i\left(x,u\left(x,t\right),\tilde{u}\left(x,t\right)\right)\bigg|dyds
\\
\leq \|\omega\|_{L^\infty(\R)}\|\nabla \rho \|_{L^\infty(\R^d)}\frac{1}{\epsilon}\intop_{t-\epsilon}^{t+\epsilon}\frac{1}{\epsilon^d}\intop_{B_{\epsilon}\left(x\right)}\bigg|\nabla_xq_i\left(x,u\left(x,t\right),\tilde{u}\left(y,s\right)\right)-\nabla_xq_i\left(x,u\left(x,t\right),\tilde{u}\left(x,t\right)\right)\bigg|dyds.
\end{multline}
By item 2a of Assumption \ref{ass:assumption on the flux} on the flux $f$ and $\tilde{u}\in L^\infty(\R^d\times I)$, we get that
\begin{equation}
(y,s)\longmapsto \nabla_xq_i\left(x,u\left(x,t\right),\tilde{u}\left(y,s\right)\right)
\end{equation}
lies in the space $L_{\text{loc}}^1(\R^d\times I)$. Therefore, by Lebesgue Differentiation Theorem, \eqref{eq: approximation of the i-th coordinate of I42} and \eqref{eq: approximation of the i-th coordinate of I42 2}, we obtain \eqref{eq: approximation of the i-th coordinate of I42 3}, from which we get the limit in $\eqref{eq:convergence of I4}$. For the inequality in $\eqref{eq:convergence of I4}$, note that, for $k_1:=u(x,t)$ and
$k_{2}:=\tilde{u}\left(y,s\right)$ we get
\begin{multline}
\label{eq:expression for difference between q(x) and q(y)}
q(y,k_{1},k_{2})-q(x,k_{1},k_{2})
=\sign(k_{1}-k_{2})(f(y,k_{1})-f(y,k_{2}))-\sign(k_{1}-k_{2})(f(x,k_{1})-f(x,k_{2}))
\\
=\sign(k_{1}-k_{2})\bigg(f(y,k_{1})-f(x,k_1)+f(x,k_2)-f(y,k_2)\bigg).
\end{multline}

Let $U_0,A,D$ and $L$ as in the formulation above \eqref{eq:convergence of I4}. Notice that for almost every $(x,t)\in P_x(\Supp(\psi))\times I$ and for every $\epsilon\in(0,D)$, we get by \eqref{eq:expression for difference between q(x) and q(y)}

\begin{multline}
\label{eq:estimate for Iepsilon4}
|I_{4}^{\epsilon}\left(x,t\right)|
\leq\intop_{I}\intop_{\mathbb{R}^d}\omega_\epsilon(t-s)|\nabla_{y}\rho_{\epsilon}(x-y)|\Bigl| q(y,u(x,t),\tilde{u}(y,s))-q(x,u(x,t),\tilde{u}(y,s))\Bigr|dyds
\\
\leq\intop_{I}\intop_{B_\epsilon(x)}\omega_\epsilon(t-s)|\nabla_{y}\rho_{\epsilon}(x-y)|\Bigl|f(x,u(x,t))-f(y,u(x,t))\Bigr|dyds
\\
+\intop_{I}\intop_{B_\epsilon(x)}\omega_\epsilon(t-s)|\nabla_{y}\rho_{\epsilon}(x-y)|\Bigl|f(x,\tilde{u}(y,s))-f(y,\tilde{u}(y,s))\Bigr|dyds
\\
\leq 2L\intop_{I}\intop_{B_\epsilon(x)}\omega_\epsilon(t-s)|\nabla_{y}\rho_{\epsilon}(x-y)||x-y|dyds
\leq 2L\mathcal{L}^d(B_1(0))\|\nabla\rho\|_{L^\infty(\R^d)}\sqrt{d},
\end{multline}
where in the last inequality of \eqref{eq:estimate for Iepsilon4} we use: for every $y\in B_\epsilon(x)$ 
\begin{multline}
\label{eq:bound for derivative of rho epsilon1}
\left|\nabla_{y}\rho_{\epsilon}(x-y)\right||x-y|=\left|\frac{1}{\epsilon^d}\nabla \rho\left(\frac{x-y}{\epsilon}\right)\cdot\left(-\frac{1}{\epsilon}I_{d\times d}\right)\right||x-y|
\\
\leq\left|\frac{1}{\epsilon^d}\nabla \rho\left(\frac{x-y}{\epsilon}\right)\right|\sqrt{d}
\leq \frac{1}{\epsilon^d}\|\nabla \rho \|_{L^\infty(\R^d)}\sqrt{d},
\end{multline}
where $I_{d\times d}$ is the identity matrix of size $d\times d$.
It completes the proof of \eqref{eq:convergence of I4}.
\end{proof} 

\begin{thm}
\label{thm:localized $L^1$-contraction property}
(Local $L^1$-contraction property)
Let $u$,$\tilde{u}$ be two entropy solutions of $\eqref{eq: The Cauchy problem for the scalar conservation law}$, and $f$ is a flux as in Assumption \ref{ass:assumption on the flux}. Let us define
\begin{equation}
\label{eq:definition of M}
M:=\max\left\{\nnorm[u]_{L^\infty(\R^d\times I)},\nnorm[\tilde{u}]_{L^\infty(\R^d\times I)}\right\}<\infty,\quad I=(0,\infty),
\end{equation}
and for each $R\in (0,\infty)$ we define 
\begin{equation}
\label{eq:definition of N in the proof of uniqueness}
N:=N_M(R):=\sup\Set{\frac{|f(x,k)-f(x,k')|}{|k-k'|}}[x\in B_R(0),\,{k,k'\in [-M,M]},\,k\neq k']<\infty.
\end{equation}
The finiteness in \eqref{eq:definition of N in the proof of uniqueness} is due to the assumption that the flux $f=f(x,k)$ is locally Lipschitz in $\R^d\times \R$.
Then, there exists a set $\mathcal{N}\subset (0,N^{-1}R)$ such that $\mathcal{L}^1(\mathcal{N})=0$ and 
\begin{equation}
\intop_{B_{R-\tau N}(0)}\left|u(x,\tau)-\tilde{u}(x,\tau)\right|dx\leq\intop_{B_{R-\rho N}(0)}\left|u(x,\rho)-\tilde{u}(x,\rho)\right|dx,
\end{equation}
for every $\rho,\tau\in (0,N^{-1}R)\setminus \mathcal{N}$ with $\rho\leq \tau$. In other words, the function $t\mapsto \left\|u\left(\cdot,t\right)-\tilde{u}\left(\cdot,t\right)\right\|_{L^1(B_{R-tN}(0))}$ is non-increasing on $(0,N^{-1}R)$ outside a set of measure zero.
\end{thm}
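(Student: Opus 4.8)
The plan is to derive Theorem \ref{thm:localized $L^1$-contraction property} directly from the localized contraction property (Lemma \ref{lem:Kato's inequality}) by testing \eqref{eq:final term, in statement of theorem} against a Lipschitz regularization of the characteristic function of the truncated backward cone $\{(x,t):\rho\le t\le\tau,\ |x|\le R-Nt\}$. Everything rests on one elementary inequality coming from the definition of $N=N_M(R)$ in \eqref{eq:definition of N in the proof of uniqueness}: for $x\in B_R(0)$ and $k_1,k_2\in[-M,M]$,
\[
|q(x,k_1,k_2)|=|f(x,k_1)-f(x,k_2)|\le N|k_1-k_2|=N\,\eta(k_1,k_2),
\]
and since $u,\tilde u$ take values in $[-M,M]$ almost everywhere, this yields the pointwise bound $|q(x,u(x,t),\tilde u(x,t))|\le N\,\eta(u(x,t),\tilde u(x,t))$ for a.e.\ $(x,t)$ with $|x|<R$. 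The cone's slope $N$ is chosen precisely so that this propagation-speed bound turns the lateral boundary contribution into a term with a favourable sign.

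Concretely, I would set $\psi_{h,\delta}(x,t):=\phi_h(t)\,\chi_\delta(x,t)$, where $\chi_\delta(x,t):=\mu_\delta\big(R-Nt-|x|\big)$, $\mu_\delta:\R\to[0,1]$ is a nondecreasing Lipschitz function equal to $0$ on $(-\infty,0]$ and to $1$ on $[\delta,\infty)$ (for instance piecewise linear), and $\phi_h:\R\to[0,1]$ is Lipschitz with $\phi_h\equiv1$ on $[\rho,\tau]$, $\phi_h\equiv0$ outside $(\rho-h,\tau+h)$, and $0<h<\rho$. Then $0\le\psi_{h,\delta}\in\operatorname{Lip}_{c}(\R^d\times I)$: if $\chi_\delta(x,t)\ne0$ then $|x|<R-Nt<R$ (using $t>0$ on $\operatorname{supp}\phi_h$), so the support is a bounded subset of $\R^d\times I$. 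For a.e.\ $(x,t)$ one computes $\partial_t\chi_\delta=-N\mu_\delta'(R-Nt-|x|)\le0$ and $\nabla_x\chi_\delta=-\mu_\delta'(R-Nt-|x|)\,x/|x|$, so $|\nabla_x\chi_\delta|=\mu_\delta'(R-Nt-|x|)$, and $\mu_\delta'$ is supported where $|x|<R-Nt<R$. Writing $\partial_t\psi_{h,\delta}=\phi_h'\chi_\delta+\phi_h\partial_t\chi_\delta$ and $\nabla_x\psi_{h,\delta}=\phi_h\nabla_x\chi_\delta$, the Cauchy--Schwarz inequality together with $\phi_h,\eta\ge0$ and the bound on $|q|$ gives, for a.e.\ $(x,t)$ (arguments of $\eta,q$ suppressed),
\[
\phi_h\big(\partial_t\chi_\delta\,\eta+\nabla_x\chi_\delta\cdot q\big)\le\phi_h\,\mu_\delta'\big(-N\eta+|q|\big)\le0,
\]
hence $\partial_t\psi_{h,\delta}\,\eta+\nabla_x\psi_{h,\delta}\cdot q\le\phi_h'(t)\,\chi_\delta(x,t)\,\eta\big(u(x,t),\tilde u(x,t)\big)$ pointwise a.e.

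Substituting $\psi_{h,\delta}$ into \eqref{eq:final term, in statement of theorem} and invoking the last inequality gives $\int_I\int_{\R^d}\phi_h'(t)\,\chi_\delta(x,t)\,\eta(u,\tilde u)(x,t)\,dx\,dt\ge0$. Put $g_\delta(t):=\int_{\R^d}\chi_\delta(x,t)\,\eta(u,\tilde u)(x,t)\,dx$; by Fubini and $u,\tilde u\in L^\infty$ this is a bounded, hence $L^1_{\mathrm{loc}}(I)$, function, and the inequality reads $\int_I\phi_h'(t)\,g_\delta(t)\,dt\ge0$, that is, $\frac1h\int_{\rho-h}^{\rho}g_\delta-\frac1h\int_{\tau}^{\tau+h}g_\delta\ge0$ for the piecewise-linear $\phi_h$. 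Letting $h\to0^+$ along $\rho,\tau$ that are Lebesgue points of $g_\delta$ yields $g_\delta(\tau)\le g_\delta(\rho)$ for all $\rho\le\tau$ in $(0,N^{-1}R)$ outside a Lebesgue-null set $\mathcal N_\delta$. Finally, choose a sequence $\delta_n\downarrow0$, set $\mathcal N:=\bigcup_n\mathcal N_{\delta_n}$ (still Lebesgue-null), and note $0\le\chi_{\delta_n}\le1$ with $\chi_{\delta_n}(x,t)\to1$ for a.e.\ $x\in B_{R-Nt}(0)$ and $\to0$ for $|x|>R-Nt$; dominated convergence then gives $g_{\delta_n}(t)\to\int_{B_{R-Nt}(0)}|u-\tilde u|(x,t)\,dx$ for every $t\in(0,N^{-1}R)$, and passing to the limit in $g_{\delta_n}(\tau)\le g_{\delta_n}(\rho)$ produces exactly the asserted monotonicity of $t\mapsto\|u(\cdot,t)-\tilde u(\cdot,t)\|_{L^1(B_{R-tN}(0))}$ on $(0,N^{-1}R)\setminus\mathcal N$.

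I do not expect a substantive obstacle here: all the delicate regularity hypotheses in Assumption \ref{ass:assumption on the flux} have already been spent in the proof of Lemma \ref{lem:Kato's inequality}, whose conclusion is divergence-free, so the present argument is a standard ``cone test function'' computation. The only care needed is bookkeeping --- making the exceptional set $\mathcal N$ independent of the regularization parameters (handled by the countable sequence $\delta_n$ together with the fact that, for a.e.\ $\rho,\tau$, both one-sided limits at Lebesgue points of every $g_{\delta_n}$ exist simultaneously), and observing that the identity $\partial_t\chi_\delta+N|\nabla_x\chi_\delta|=0$ holds only off $\{x=0\}$, a $\mathcal L^d$-null set irrelevant inside the space--time integral. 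One should also keep $\mu_\delta'$ supported strictly inside $\{|x|<R\}$, as arranged above, so that the propagation bound $|q|\le N\eta$ may legitimately be applied where it is used.
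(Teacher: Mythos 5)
Your proposal is correct and follows essentially the same route as the paper's proof: both test Lemma \ref{lem:Kato's inequality} with a product of a temporal plateau function and a Lipschitz regularization of the indicator of the cone $\{|x|<R-Nt\}$, discard the lateral-boundary term using the bound $|q(x,u,\tilde u)|\le N\,\eta(u,\tilde u)$ that the definition \eqref{eq:definition of N in the proof of uniqueness} of $N$ provides on $B_R(0)\times[-M,M]^2$, and then pass to the limit at Lebesgue points in time. The only differences are cosmetic (piecewise-linear cutoffs in place of mollified ones, and sending $h\to0$ before the spatial regularization parameter, which you correctly compensate for by taking a countable sequence $\delta_n$ so that the exceptional set $\mathcal N$ is regularization-independent).
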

The proof we give here for  Theorem \ref{thm:localized $L^1$-contraction property} was originally given by Kruzkov (see \cite{Kruzkov}). However, we provide the proof here for the sake of completeness and with additional details.

\begin{proof}
We divide the proof into three parts. In the first part, we introduce the functions $\alpha_h$ and $\chi_\epsilon$ and establish their properties. In the second part, we construct a test function $\psi$ for the localized inequality \eqref{eq:final term, in statement of theorem} using the functions $\alpha_h$ and $\chi_\epsilon$, and derive estimates for $\partial_t\psi$ and $\nabla_x\psi$. In the third part, we use the localized inequality and the choice of $\psi$ to establish the local $L^1$-contraction property.
\\
\textbf{Part 1}
\\
For the numbers $N,R$, we define a set
\begin{equation}
\mathcal{K}:=\Set{(x,t)\in \R^d\times I}[t\in {(0,N^{-1}R)},\,\,x\in B_{R-tN}(0)].
\end{equation}
The set $\mathcal{K}$ is an open cone with base in $B_R(0) \times \{0\} \subset \mathbb{R}^d \times \mathbb{R}$ and vertex at the point $(0, N^{-1}R) \in \mathbb{R}^d \times \mathbb{R}$.
 
Let us define for every $h\in (0,\infty)$
\begin{equation}
\label{eq:definition of alpha h}
\alpha_h(\sigma):=\intop_{-\infty}^\sigma\omega_h(s)ds,\quad \sigma\in \R,
\end{equation}
where $\omega_h$ is the one-dimensional mollifier kernel as defined in \eqref{eq: standard mollifier kernel}. Recall the three properties of $\omega_h$: $\omega_h\geq 0$, $\intop_{-\infty}^\infty\omega_h(s)ds=1$, and $\Supp(\omega_h)=[-h,h]$. 
Note that the function $\alpha_h$ has the properties: $0\leq \alpha_h\leq 1$, and it is monotonically non-decreasing on $\R$. For every $\epsilon\in (0,\infty)$, let us define the function
\begin{equation}
\label{eq:definition of chi epsilon}
\chi_\epsilon(x,t):=1-\alpha_\epsilon\left(|x|-[R-tN]+\epsilon\right),\quad (x,t)\in \R^d\times I.
\end{equation}
Note that
\begin{equation}
\label{eq:bound on chi epsilon}
\forall\epsilon\in (0,\infty),\\\forall(x,t)\in \R^d\times I\quad \Longrightarrow\quad  0\leq \chi_\epsilon(x,t)\leq 1.
\end{equation}
In addition, we have
\begin{equation}
\label{eq:chi epsilon vanishes outside the cone K}
\forall\epsilon\in(0,\infty),\forall (x,t)\in\left(\R^d\times I\right)\setminus \mathcal{K}\quad \Longrightarrow\quad \chi_\epsilon(x,t)=0.
\end{equation}
Indeed, if $(x,t)\in\left(\R^d\times I\right)\setminus \mathcal{K}$, then $t\geq N^{-1}R$ or $|x|\geq  R-tN$. If $t\geq N^{-1}R$, then $|x|-[R-tN]+\epsilon\geq |x|+\epsilon$; if $|x|\geq R-tN$, then $|x|-[R-tN]+\epsilon\geq \epsilon$. Hence, in both cases $1\geq \alpha_\epsilon\left(|x|-[R-tN]+\epsilon\right)\geq \alpha_\epsilon(\epsilon)=1$. Therefore, $\chi_\epsilon(x,t)=1-\alpha_\epsilon\left(|x|-[R-tN]+\epsilon\right)=0$. This proves \eqref{eq:chi epsilon vanishes outside the cone K}.

The next property of $\chi_\epsilon$ we need is the following: for every $(x,t)\in \R^d\times I$
\begin{equation}
\label{eq:limit of chi epsilon converges to the indicator of the cone}
\lim_{\epsilon\to 0^+}\chi_\epsilon(x,t)=\chi_{\mathcal{K}}(x,t),
\end{equation}
where $\chi_{\mathcal{K}}$ is the characteristic function of $\mathcal{K}$. In case $(x,t)\in \left(\R^d\times I\right)\setminus \mathcal{K}$, we get  the limit in \eqref{eq:limit of chi epsilon converges to the indicator of the cone} from \eqref{eq:chi epsilon vanishes outside the cone K}. For $(x,t)\in \mathcal{K}$ we denote $\xi(x,t):=|x|-[R-tN]<0$. For every $\epsilon$ such that $\xi(x,t)+\epsilon<-\epsilon$ we get
\begin{equation}
0\leq \alpha_\epsilon\left(|x|-[R-tN]+\epsilon\right)=\alpha_\epsilon\left(\xi(x,t)+\epsilon\right)\leq \alpha_\epsilon(-\epsilon)=\intop_{-\infty}^{-\epsilon}\omega_\epsilon(s)ds=0.
\end{equation}
Hence, for every $(x,t)\in \mathcal{K}$, we have the limit
\begin{equation}
\lim_{\epsilon\to 0^+}\chi_\epsilon(x,t)=1-\lim_{\epsilon\to 0^+}\alpha_\epsilon\left(|x|-[R-tN]+\epsilon\right)=1.
\end{equation}
Therefore, we get \eqref{eq:limit of chi epsilon converges to the indicator of the cone}.
\\
\textbf{Part 2}
\\
Notice that since $u,\tilde{u}\in L^\infty(\R^d\times I)$, the function 
\begin{equation}
\label{eq:Lebesgue points of integral bounded function}
t\longmapsto \int_{B_{R-tN}(0)}|u(x,t)-\tilde{u}(x,t)|dx
\end{equation}
is integrable on $(0,N^{-1}R)$. Indeed, we have the estimate:
\begin{multline}
\int_{(0,N^{-1}R)}\left[\int_{B_{R-tN}(0)}|u(x,t)-\tilde{u}(x,t)|dx\right]dt\leq \int_{(0,N^{-1}R)}\left[\int_{B_{R}(0)}|u(x,t)-\tilde{u}(x,t)|dx\right]dt
\\
\leq \left(\nnorm[u]_{L^\infty(\R^d\times I)}+\nnorm[\tilde{u}]_{L^\infty(\R^d\times I)}\right)N^{-1}R\,\mathcal{L}^d(B_R(0))<\infty.
\end{multline}
Hence almost every point is a Lebesgue point of this function. Let $0<\rho<\tau<N^{-1}R$ be Lebesgue points of this function. Using \eqref{eq:definition of alpha h} and \eqref{eq:definition of chi epsilon}, we define 
\begin{equation}
\psi(x,t):=\left(\alpha_h(t-\rho)-\alpha_h(t-\tau)\right)\chi_\epsilon(x,t).
\end{equation} 
We will use the function $\psi$ as a test function in the localized inequality \eqref{eq:final term, in statement of theorem}. Let us prove that $\psi\geq 0$ and $\psi\in \operatorname{Lip}_{c}\left(\R^d\times I\right)$. Since $\alpha_h$ is monotonically non-decreasing, and \eqref{eq:bound on chi epsilon}, we get that $\psi$ is a product of non-negative functions and hence non-negative.
By \eqref{eq:chi epsilon vanishes outside the cone K}, we have $\psi(x,t)=0$ whenever $|x|>R$ or $t>N^{-1}R$ because $\chi_\epsilon(x,t)=0$. Note that if $0<h<\rho$ and $t<\rho-h$, then 
\begin{equation}
0\leq \alpha_h(t-\tau)\leq \alpha_h(t-\rho)=\int_{-\infty}^{t-\rho}\omega_h(s)ds=0.
\end{equation}  
Therefore, $\psi(x,t)=0$ for $0<t<\rho-h$. Hence, $\Supp(\psi)\subset \overline{B}_R(0)\times [\rho-h,N^{-1}R]$, so it is a compact set in $\R^d\times I$ provided $0<h<\rho$. 

Note that $\psi:\R^d\times I\to [0,\infty)$ is continuous function. Let us compute the partial derivatives of $\psi$. By the Fundamental Theorem of Calculus we have 
\begin{equation}
\label{eq:time derivation of alpha h}
\partial_t\left(\alpha_h(t-\rho)-\alpha_h(t-\tau)\right)=\omega_h(t-\rho)-\omega_h(t-\tau)
\end{equation}
and
\begin{equation}
\label{eq:time derivation of chi epsilon}
\partial_t\chi_\epsilon(x,t)=-\omega_\epsilon\left(|x|-[R-tN]+\epsilon\right)N.
\end{equation}
Therefore, by \eqref{eq:time derivation of alpha h} and \eqref{eq:time derivation of chi epsilon}, we obtain
\begin{multline}
\label{eq:time derivation of psi}
\partial_t\psi(x,t)=\partial_t\left(\alpha_h(t-\rho)-\alpha_h(t-\tau)\right)\chi_\epsilon(x,t)+\left(\alpha_h(t-\rho)-\alpha_h(t-\tau)\right)\partial_t\chi_\epsilon(x,t)
\\
=\left(\omega_h(t-\rho)-\omega_h(t-\tau)\right)\chi_\epsilon(x,t)+\left(\alpha_h(t-\rho)-\alpha_h(t-\tau)\right)\left(-\omega_\epsilon\left(|x|-[R-tN]+\epsilon\right)N\right).
\end{multline}
From \eqref{eq:time derivation of psi}, we get the estimate 
\begin{equation}
\label{eq:estimate for time derivation of psi}
|\partial_t\psi(x,t)|\leq 2\|\omega_h\|_{L^\infty(\R)}
+2N\|\omega_\epsilon\|_{L^\infty(\R)},\quad \forall (x,t)\in \R^d\times I.
\end{equation}
For every $x\neq 0$, we get by the Fundamental Theorem of Calculus
\begin{equation}
\label{eq:spatial derivation of chi}
\nabla_x\chi_\epsilon(x,t)=-\omega_\epsilon\left(|x|-[R-tN]+\epsilon\right)\frac{x}{|x|}.
\end{equation}
Therefore, using \eqref{eq:spatial derivation of chi}, we get
\begin{multline}
\label{eq:spatial derivation of psi}
\nabla_x\psi(x,t)=\left(\alpha_h(t-\rho)-\alpha_h(t-\tau)\right)\nabla_x\chi_\epsilon(x,t)
\\
=\left(\alpha_h(t-\rho)-\alpha_h(t-\tau)\right)\left(-\omega_\epsilon\left(|x|-[R-tN]+\epsilon\right)\frac{x}{|x|}\right).
\end{multline}
Therefore, by \eqref{eq:spatial derivation of psi}, we have the estimate 
\begin{equation}
\label{eq:estimate for spatial derivation of psi}
|\nabla_x\psi(x,t)|\leq 2\|\omega_\epsilon\|_{L^\infty(\R)},\quad \forall(x,t)\in \left(\R^d\setminus\{0\}\right)\times I.
\end{equation}
Since $\psi$ is continuous in $\R^d\times I$, and we have the bounds \eqref{eq:estimate for time derivation of psi}, \eqref{eq:estimate for spatial derivation of psi}, we conclude that $\psi$ is Lipschitz in $\R^d\times I$. Therefore, $\psi$ is a legal test function for the localized inequality \eqref{eq:final term, in statement of theorem}.
\\
\textbf{Part 3}
\\
Let $(\eta,q)$ be as in \eqref{eq:definition of eta and the entropy flux in the contraction property}. By Lemma \ref{lem:Kato's inequality}, we get 
\begin{equation}
\label{eq:subsistution into Kato inequality}
\iint_{\mathcal{K}}\bigg[\partial_t\psi(x,t)\eta\left(u\left(x,t\right),\tilde{u}\left(x,t\right)\right)+\nabla_x\psi(x,t)\cdot q\left(x,u\left(x,t\right),\tilde{u}\left(x,t\right)\right)\bigg]dxdt\geq 0.
\end{equation} 
From \eqref{eq:time derivation of psi} and \eqref{eq:spatial derivation of psi} we have
\begin{multline}
\label{eq:the entropy integrand}
\partial_t\psi(x,t)\eta\left(u\left(x,t\right),\tilde{u}\left(x,t\right)\right)+\nabla_x\psi(x,t)\cdot q\left(x,u\left(x,t\right),\tilde{u}\left(x,t\right)\right)
\\
=\Big[\left(\omega_h(t-\rho)-\omega_h(t-\tau)\right)\chi_\epsilon(x,t)+\left(\alpha_h(t-\rho)-\alpha_h(t-\tau)\right)\left(-\omega_\epsilon\left(|x|-[R-tN]+\epsilon\right)N\right)\Big]\eta\left(u\left(x,t\right),\tilde{u}\left(x,t\right)\right)
\\
+\Big[\left(\alpha_h(t-\rho)-\alpha_h(t-\tau)\right)\left(-\omega_\epsilon\left(|x|-[R-tN]+\epsilon\right)\frac{x}{|x|}\right)\Big]\cdot q\left(x,u\left(x,t\right),\tilde{u}\left(x,t\right)\right)
\\
=\left(\omega_h(t-\rho)-\omega_h(t-\tau)\right)\chi_\epsilon(x,t)\eta\left(u\left(x,t\right),\tilde{u}\left(x,t\right)\right)
\\
-\omega_\epsilon\left(|x|-[R-tN]+\epsilon\right)\left(\alpha_h(t-\rho)-\alpha_h(t-\tau)\right)\Bigg[\frac{x}{|x|}\cdot q\left(x,u\left(x,t\right),\tilde{u}\left(x,t\right)\right)
+N\eta\left(u\left(x,t\right),\tilde{u}\left(x,t\right)\right)\Bigg].
\end{multline}
Recall that
\begin{equation}
q\left(x,u\left(x,t\right),\tilde{u}\left(x,t\right)\right):=\sign\left(u(x,t)-\tilde{u}(x,t)\right)\Big(f(x,u(x,t))-f(x,\tilde{u}(x,t))\Big).
\end{equation}
For almost every $(x,t)\in \mathcal{K}$, we get by the definition of the number $N$ (see \eqref{eq:definition of N in the proof of uniqueness}) 
\begin{equation}
\label{eq:bound on the entropy flux through N}
\left|\frac{x}{|x|}\cdot q\left(x,u\left(x,t\right),\tilde{u}\left(x,t\right)\right)\right|
\leq \left|f(x,u(x,t))-f(x,\tilde{u}(x,t))\right|\leq N|u(x,t)-\tilde{u}(x,t)|=N\eta\left(u(x,t),\tilde{u}(x,t)\right).
\end{equation}
Let us denote the term in the last line of \eqref{eq:the entropy integrand} by $H(x,t)$, meaning that 
\begin{equation}
H(x,t):=\omega_\epsilon\left(|x|-[R-tN]+\epsilon\right)\left(\alpha_h(t-\rho)-\alpha_h(t-\tau)\right)\Bigg[\frac{x}{|x|}\cdot q\left(x,u\left(x,t\right),\tilde{u}\left(x,t\right)\right)
+N\eta\left(u\left(x,t\right),\tilde{u}\left(x,t\right)\right)\Bigg].
\end{equation}
Note that $H$ is a non-negative function as a product of non-negative functions: the mollifier kernel $\omega_\epsilon$ is non-negative by its definition; the function $\alpha_h$ is monotonically non-decreasing and we assume that $\rho<\tau$; and the last function, $|x|^{-1}x\cdot q(x,u,\tilde{u})+N\eta(u,\tilde{u})$, is non-negative by \eqref{eq:bound on the entropy flux through N}. 

Now, substituting \eqref{eq:the entropy integrand} into \eqref{eq:subsistution into Kato inequality}, and using the non-negativity of $H$, we obtain:
\begin{equation}
\label{eq:subsistution into Kato inequality1}
\intop_0^{N^{-1}R}\intop_{B_{R-tN}(0)}\left(\omega_h(t-\rho)-\omega_h(t-\tau)\right)\chi_\epsilon(x,t)\eta\left(u\left(x,t\right),\tilde{u}\left(x,t\right)\right)dxdt\geq
\intop_0^{N^{-1}R}\intop_{B_{R-tN}(0)}H(x,t)dxdt\geq 0.
\end{equation}
Taking the limit as $\epsilon\to 0^+$ in \eqref{eq:subsistution into Kato inequality1} and taking into account \eqref{eq:limit of chi epsilon converges to the indicator of the cone} and \eqref{eq:bound on chi epsilon}, we get by Dominated Convergence Theorem
\begin{equation}
\label{eq:subsistution into Kato inequality2}
\intop_0^{N^{-1}R}\intop_{B_{R-tN}(0)}\left(\omega_h(t-\rho)-\omega_h(t-\tau)\right)\eta\left(u\left(x,t\right),\tilde{u}\left(x,t\right)\right)dxdt\geq 0.
\end{equation}
From \eqref{eq:subsistution into Kato inequality2} and linearity of integral, we get 
\begin{equation}
\label{eq:subsistution into Kato inequality3}
\intop_0^{N^{-1}R}\omega_h(t-\rho)\left[\intop_{B_{R-tN}(0)}\eta\left(u\left(x,t\right),\tilde{u}\left(x,t\right)\right)dx\right]dt\geq 
\intop_0^{N^{-1}R}\omega_h(t-\tau)\left[\intop_{B_{R-tN}(0)}\eta\left(u\left(x,t\right),\tilde{u}\left(x,t\right)\right)dx\right]dt.
\end{equation}
Recall the choice of $\tau,\rho$ as Lebesgue points (see \eqref{eq:Lebesgue points of integral bounded function} and below it). Note also that for every $h\in (0,\infty)$ with $h<\min\{\rho, N^{-1}R-\tau\}$, the intervals $(\rho-h,\rho+h)$ and $(\tau-h,\tau+h)$ are subsets of $(0,N^{-1}R)$. Taking the limit in \eqref{eq:subsistution into Kato inequality3} as $h\to 0^+$, we get
\begin{equation}
\label{eq:subsistution into Kato inequality4}
\intop_{B_{R-\rho N}(0)}\eta\left(u(x,\rho),\tilde{u}(x,\rho)\right)dx\geq 
\intop_{B_{R-\tau N}(0)}\eta\left(u(x,\tau),\tilde{u}(x,\tau)\right)dx.
\end{equation}
This completes the proof.
\end{proof}

\begin{cor}
\label{lem:$L^1$-contraction property}
(Global $L^1$-contraction property)
Let $u$,$\tilde{u}$ be two entropy solutions of $\eqref{eq: The Cauchy problem for the scalar conservation law}$, and $f$ is a flux as in Assumption \ref{ass:assumption on the flux}.  Let $M$ be as in \eqref{eq:definition of M}; for $R\in (0,\infty)$ we define $N:=N_M(R)$ as in \eqref{eq:definition of N in the proof of uniqueness}. Assume that
\begin{equation}
\label{eq:limit of NR-1}
\lim_{R\to\infty}\frac{N}{R}=0.
\end{equation}
Then, there exists a set $\mathcal{N}\subset I=(0,\infty)$ such that $\mathcal{L}^1(\mathcal{N})=0$ and 
\begin{equation}
\label{eq:global L1-contraction property}
\intop_{\R^d}\left|u(x,\tau)-\tilde{u}(x,\tau)\right|dx\leq\intop_{\R^d}\left|u(x,\rho)-\tilde{u}(x,\rho)\right|dx,
\end{equation}
for every $\rho,\tau\in I\setminus \mathcal{N}$ with $\rho\leq \tau$. 
\end{cor}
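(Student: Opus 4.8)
The plan is to obtain the global inequality as a limit of the local one (Theorem \ref{thm:localized $L^1$-contraction property}) by letting the radius $R$ of the cone of dependence tend to infinity. The role of hypothesis \eqref{eq:limit of NR-1} is precisely to make the geometry cooperate: since $N/R \to 0$, the lifetime $N^{-1}R$ of the cone $\mathcal{K}$ tends to $\infty$, and for any fixed time $t$ the cross-sectional radius $R - tN = R\left(1 - tN/R\right)$ also tends to $\infty$.

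First I would pick a sequence $R_n \to \infty$ (say $R_n = n$) and write $N_n := N_M(R_n)$. Theorem \ref{thm:localized $L^1$-contraction property} then furnishes, for each $n$, a null set $\mathcal{N}_{R_n} \subset \left(0, N_n^{-1}R_n\right)$ outside of which the local contraction inequality on $\mathcal{K}$ holds. I would then set $\mathcal{N} := \bigcup_{n \in \N} \mathcal{N}_{R_n}$, a countable union of $\mathcal{L}^1$-null subsets of $I$, hence itself $\mathcal{L}^1$-null, with $\mathcal{N} \subset I$ as required.

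Next, for a fixed pair $\rho \le \tau$ in $I \setminus \mathcal{N}$, I would use $N_n^{-1}R_n \to \infty$ to find $n_0$ with $\tau < N_n^{-1}R_n$ for all $n \ge n_0$; since $\rho, \tau \notin \mathcal{N} \supseteq \mathcal{N}_{R_n}$, the local theorem applies and yields
\[
\intop_{B_{R_n - \tau N_n}(0)} |u(x,\tau) - \tilde{u}(x,\tau)|\,dx \;\le\; \intop_{B_{R_n - \rho N_n}(0)} |u(x,\rho) - \tilde{u}(x,\rho)|\,dx \;\le\; \intop_{\R^d} |u(x,\rho) - \tilde{u}(x,\rho)|\,dx
\]
for every $n \ge n_0$. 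Because $R_n - \tau N_n \to \infty$, any ball $B_S(0)$ is contained in $B_{R_n - \tau N_n}(0)$ once $n$ is large, so $\intop_{B_S(0)} |u(x,\tau) - \tilde{u}(x,\tau)|\,dx \le \intop_{\R^d}|u(x,\rho) - \tilde{u}(x,\rho)|\,dx$ for every $S > 0$; letting $S \to \infty$ and invoking the Monotone Convergence Theorem gives \eqref{eq:global L1-contraction property}.

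The only real obstacle is bookkeeping rather than analysis: both the exceptional null set and the propagation speed $N$ depend on $R$, so one cannot conclude from a single application of the local theorem, and one must first merge the exceptional sets into a single null set $\mathcal{N}$ and then exploit \eqref{eq:limit of NR-1} to keep the cones expanding in both the temporal and spatial directions. I would also remark that no $L^1$-integrability of $u - \tilde{u}$ on $\R^d$ needs to be assumed: if the right-hand side of \eqref{eq:global L1-contraction property} equals $+\infty$ the inequality is trivial, and the limiting argument above remains valid in that case as well.
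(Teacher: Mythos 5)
Your proposal is correct and follows essentially the same route as the paper: apply the local contraction theorem along a sequence $R_n\to\infty$, merge the countably many exceptional null sets into one, use $N_n^{-1}R_n\to\infty$ and $R_n-\tau N_n\to\infty$ (both consequences of \eqref{eq:limit of NR-1}) to make the cones exhaust $\mathbb{R}^d\times I$, and pass to the limit. Your intermediate step through a fixed ball $B_S(0)$ followed by $S\to\infty$ with monotone convergence is just a slightly more explicit version of the paper's direct limit $j\to\infty$, and your closing remark that no global integrability of $u-\tilde{u}$ is needed matches the paper's own observation.
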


\begin{proof}
Let $\{R_j\}_{j\in\N}$ be a sequence of positive numbers which converges to infinity as $j\to \infty$. From Theorem \ref{thm:localized $L^1$-contraction property}, we have for every $j\in\N$ a set $\mathcal{N}_j\subset (0,N_j^{-1}R_j)$, $N_j:=N_M(R_j)$, such that $\mathcal{L}^1(\mathcal{N}_j)=0$ and 
\begin{equation}
\label{eq:usage of localized contraction theory}
\intop_{B_{R_j-\tau N_j}(0)}\left|u(x,\tau)-\tilde{u}(x,\tau)\right|dx\leq\intop_{B_{R_j-\rho N_j}(0)}\left|u(x,\rho)-\tilde{u}(x,\rho)\right|dx\leq \intop_{\R^d}\left|u(x,\rho)-\tilde{u}(x,\rho)\right|dx,
\end{equation}
for every $\rho,\tau\in (0,N_j^{-1}R_j)\setminus \mathcal{N}_j$ with $\rho\leq \tau$. The right-hand side of \eqref{eq:usage of localized contraction theory} can be equal to $\infty$. We get from the assumption \eqref{eq:limit of NR-1}
\begin{equation}
\label{eq:limit of NR-1 (1)}
\lim_{j\to\infty}\left(R_j-\tau N_j\right)=\lim_{j\to\infty}R_j\left(1-\tau\frac{N_j}{R_j}\right)=\infty.
\end{equation}
Let us define $\mathcal{N}:=\bigcup\limits_{j=1}^\infty\mathcal{N}_j$. Note that $\mathcal{L}^1(\mathcal{N})=0$. Let $\rho,\tau\in I\setminus \mathcal{N}$ be such that $\rho\leq \tau$. From \eqref{eq:limit of NR-1}, we get for every big enough $j$ that $0<\rho\leq \tau<N_j^{-1}R_j$. Therefore, we get from \eqref{eq:usage of localized contraction theory}
\begin{equation}
\label{eq:usage of localized contraction theory1}
\intop_{B_{R_j-\tau N_j}(0)}\left|u(x,\tau)-\tilde{u}(x,\tau)\right|dx\leq \intop_{\R^d}\left|u(x,\rho)-\tilde{u}(x,\rho)\right|dx.
\end{equation}
Taking the limit as $j\to \infty$ in \eqref{eq:usage of localized contraction theory1} and taking into account \eqref{eq:limit of NR-1 (1)}, we obtain \eqref{eq:global L1-contraction property}. Note that we do not assume global integrability of $u,\tilde{u}$.
\end{proof}

\begin{cor}[Uniqueness of entropy solutions]
Let $u$,$\tilde{u}$ be two entropy solutions of $\eqref{eq: The Cauchy problem for the scalar conservation law}$
with the same initial data $u_{0}$. Let $M$ be as in \eqref{eq:definition of M}; for $R\in (0,\infty)$ we define $N:=N_M(R)$ as in \eqref{eq:definition of N in the proof of uniqueness}. Assume that
\begin{equation}
\lim_{R\to\infty}\frac{N}{R}=0.
\end{equation}
Then, $u=\tilde{u}$ almost everywhere in $\R^d\times I$.
\end{cor}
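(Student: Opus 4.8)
The plan is to combine the local $L^1$-contraction property (Theorem~\ref{thm:localized $L^1$-contraction property}) with the initial-trace condition in the definition of an entropy solution (item~2 of Definition~\ref{def:(entropy-solution)}), and then exhaust $\R^d$ by sending $R\to\infty$ exactly as in the proof of Corollary~\ref{lem:$L^1$-contraction property}. I would \emph{not} merely invoke the global contraction corollary, since its right-hand side $\int_{\R^d}|u(x,\rho)-\tilde u(x,\rho)|\,dx$ may equal $+\infty$ for every $\rho$ when $u_0$ fails to be globally integrable; instead I keep the truncated integrals over $B_{R-tN}(0)$ throughout.

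First I would fix $R\in(0,\infty)$, set $N=N_M(R)$, and let $\mathcal{N}_R\subset(0,N^{-1}R)$ be the null set from Theorem~\ref{thm:localized $L^1$-contraction property}, so that $t\mapsto\int_{B_{R-tN}(0)}|u(x,t)-\tilde u(x,t)|\,dx$ is non-increasing on $(0,N^{-1}R)\setminus\mathcal{N}_R$. By item~2 of Definition~\ref{def:(entropy-solution)} applied to each of $u,\tilde u$ there are null sets $\Psi_u,\Psi_{\tilde u}\subset I$ (independent of $R$) along whose complements $\int_{B_R(0)}|u(x,t)-u_0(x)|\,dx\to0$ and $\int_{B_R(0)}|\tilde u(x,t)-u_0(x)|\,dx\to0$ as $t\to0^+$. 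I would then pick a sequence $\rho_n\to0^+$ with $\rho_n\in(0,N^{-1}R)\setminus(\mathcal{N}_R\cup\Psi_u\cup\Psi_{\tilde u})$, which is possible because the excluded set is $\mathcal{L}^1$-null. For any $\tau\in(0,N^{-1}R)\setminus\mathcal{N}_R$ and every $n$ large enough that $\rho_n\le\tau$, monotonicity together with $B_{R-\rho_n N}(0)\subset B_R(0)$ and the triangle inequality give
\[
\int_{B_{R-\tau N}(0)}|u(x,\tau)-\tilde u(x,\tau)|\,dx
\le\int_{B_{R-\rho_n N}(0)}|u(x,\rho_n)-\tilde u(x,\rho_n)|\,dx
\le\int_{B_R(0)}\big(|u(x,\rho_n)-u_0(x)|+|\tilde u(x,\rho_n)-u_0(x)|\big)\,dx,
\]
and letting $n\to\infty$ the right-hand side vanishes. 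Hence $\int_{B_{R-\tau N}(0)}|u(x,\tau)-\tilde u(x,\tau)|\,dx=0$, i.e.\ $u(\cdot,\tau)=\tilde u(\cdot,\tau)$ a.e.\ on $B_{R-\tau N}(0)$, for every $\tau\in(0,N^{-1}R)\setminus\mathcal{N}_R$.

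Finally I would take $R_j\to\infty$, write $N_j=N_M(R_j)$, and set $\mathcal{N}:=\bigcup_{j}\mathcal{N}_{R_j}\subset I$, which is $\mathcal{L}^1$-null. For a fixed $\tau\in I\setminus\mathcal{N}$, the hypothesis $\lim_{R\to\infty}N/R=0$ yields $\tau<N_j^{-1}R_j$ for all large $j$ and $R_j-\tau N_j=R_j(1-\tau N_j/R_j)\to\infty$; since $u(\cdot,\tau)=\tilde u(\cdot,\tau)$ a.e.\ on each $B_{R_j-\tau N_j}(0)$, it follows that $u(\cdot,\tau)=\tilde u(\cdot,\tau)$ a.e.\ on $\R^d$. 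As this holds for a.e.\ $\tau\in I$, Fubini's theorem gives $u=\tilde u$ a.e.\ in $\R^d\times I$.

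I expect the only delicate point to be the passage $t\to0^+$: one must choose the approximating times $\rho_n$ simultaneously outside the exceptional monotonicity set $\mathcal{N}_R$ of Theorem~\ref{thm:localized $L^1$-contraction property} and outside the two exceptional sets $\Psi_u,\Psi_{\tilde u}$ attached to the initial traces of $u$ and $\tilde u$, and then exploit the common datum $u_0$ via the triangle inequality to kill the boundary integral. The truncation to $B_{R-tN}(0)$ and the exhaustion $R\to\infty$ (using $N/R\to0$) are routine and parallel the proof of Corollary~\ref{lem:$L^1$-contraction property}.
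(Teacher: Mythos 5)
Your proposal is correct and follows essentially the same route as the paper: local contraction (Theorem \ref{thm:localized $L^1$-contraction property}) combined with the initial-trace condition \eqref{eq:continuity from the right at 0} and the triangle inequality to kill the slice integrals as $t\to 0^{+}$, followed by exhaustion using $N/R\to 0$. The only cosmetic differences are that you conclude slice-by-slice in $\tau$ and finish with Fubini, whereas the paper integrates in $\tau$ to get $u=\tilde u$ a.e.\ on each cone $\mathcal{K}_{R}$ and then writes $\R^{d}\times I$ as a countable increasing union of cones; your explicit choice of the approximating times $\rho_{n}$ outside $\mathcal{N}_{R}\cup\Psi_{u}\cup\Psi_{\tilde u}$ makes a point the paper leaves implicit.
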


\begin{proof}
By the contraction property, Theorem \ref{thm:localized $L^1$-contraction property}, there exists a set $\mathcal{N}\subset (0,N^{-1}R)$ such that $\mathcal{L}^1(\mathcal{N})=0$ and for $\rho,\tau\in (0,N^{-1}R)\setminus \mathcal{N}$, $\rho\leq \tau$, we have
\begin{multline}
\label{eq:subsistution into Kato inequality6}
\intop_{B_{R}(0)}|u(x,\rho)-u_0(x)|dx+\intop_{B_{R}(0)}|\tilde{u}(x,\rho)-u_0(x)|dx
\\
\geq \intop_{B_{R}(0)}|u(x,\rho)-\tilde{u}(x,\rho)|dx\geq \intop_{B_{R-\rho N}(0)}|u(x,\rho)-\tilde{u}(x,\rho)|dx\geq 
\intop_{B_{R-\tau N}(0)}|u(x,\tau)-\tilde{u}(x,\tau)|dx.
\end{multline}
Therefore, for every $\tau\in (0,N^{-1}R)\setminus\mathcal{N}$, by property \eqref{eq:continuity from the right at 0} (the assumption about the initial data $u_0$ in Definition \ref{def:(entropy-solution)}), we get from \eqref{eq:subsistution into Kato inequality6} 
\begin{equation}
\label{eq:subsistution into Kato inequality7}
\intop_{B_{R-\tau N}(0)}|u(x,\tau)-\tilde{u}(x,\tau)|dx=0.
\end{equation}
Hence  
\begin{equation}
\label{eq:subsistution into Kato inequality8}
\intop_0^{N^{-1}R}\intop_{B_{R-\tau N}(0)}|u(x,\tau)-\tilde{u}(x,\tau)|dxd\tau=0.
\end{equation}
Therefore, from \eqref{eq:subsistution into Kato inequality8} we conclude that $u=\tilde{u}$ almost everywhere in 
\begin{equation}
\mathcal{K}_R:=\Set{(x,t)\in \R^d\times I}[t\in {(0,N^{-1}R)},\,\,x\in B_{R-tN}(0)].
\end{equation}
 Let us prove that
\begin{equation}
\label{eq:representation via cones}
\R^d\times I=\bigcup_{R\in (0,\infty)}\mathcal{K}_R.
\end{equation}
Let $(x,t)\in \R^d\times I$. By the assumption that $\lim_{R\to\infty}\frac{N}{R}=0$, we get that 
\begin{equation}
\label{eq:sublinearity}
\lim_{R\to\infty}\frac{R}{N}=\infty,\quad \lim_{R\to\infty}(R-tN)=\lim_{R\to\infty}R\left(1-t\frac{N}{R}\right)=\infty.
\end{equation}
Therefore, there exists sufficiently large $R$ such that $t\in (0,N^{-1}R)$ and $|x|<R-tN$. Hence $(x,t)\in \mathcal{K}_R$. This proves \eqref{eq:representation via cones}. From \eqref{eq:sublinearity} we get the existence of an increasing to $\infty$ sequence $\{R_j\}_{j\in\N}\subset(0,\infty)$ such that the two sequences $\left\{N_j^{-1}R_j\right\}_{j\in\N}$ and $\{R_j-tN_j\}_{j\in\N}$ are monotonically increasing to $\infty$ as $j\to \infty$. Hence, for every $j<i$ we have $\mathcal{K}_{R_j}\subset \mathcal{K}_{R_i}$ and from \eqref{eq:representation via cones}, we obtain  
\begin{equation}
\label{eq:representation via countable numbers of cones}
\R^d\times I=\bigcup_{j=1}^\infty\mathcal{K}_{R_j}.
\end{equation}
Since $u=\tilde{u}$ almost everywhere in each $\mathcal{K}_{R_j}$, we get that $u=\tilde{u}$ almost everywhere in $\R^d\times I$.
\end{proof}

\textbf{Acknowledgement}
\\
This paper is based on the author's M.Sc. thesis at the Institute of Mathematics, Hebrew University of Jerusalem. It is a pleasure to thank my advisor, Professor Matania Ben-Artzi, for his continuous guidance and support.
\\
\\
\textbf{On behalf of all authors, the corresponding author states that there is no conflict of interest}

\vskip 0.3cm

Department of Mathematics, Ben-Gurion University of the Negev, P.O.Box 653, Beer Sheva 8410501, Israel\\ 
\emph{E-mail:} \textbf{pazhash@post.bgu.ac.il}

\end{document}